\documentclass[reqno]{amsart}
\usepackage{amsmath,amsthm,amssymb}
\usepackage{lipsum}
\usepackage{bm}
\usepackage[utf8]{inputenc}
\usepackage{esint}
\usepackage[all]{xy}
\usepackage{amsfonts} 
\usepackage{xcolor}
\usepackage{tikz}
\usepackage[colorlinks=true]{hyperref}
\usepackage{bbold}
\usepackage{enumerate}
\usepackage{calc}
\usepackage{bbold}
\usepackage{booktabs}
\usepackage{tikz-cd}
\usepackage[colorinlistoftodos,prependcaption,textsize=tiny]{todonotes}
\usepackage{todonotes}
\usepackage[shortcuts]{extdash}
\usepackage[style=alphabetic,maxnames=200,maxalphanames=6,giveninits]{biblatex}
\makeatletter
\numberwithin{equation}{section}
\theoremstyle{plain}
        \newtheorem{theorem}{Theorem}[section]

        \newtheorem{proposition}[theorem]{Proposition}
        \newtheorem{lemma}[theorem]{Lemma}

        \newtheorem{remark}[theorem]{Remark}

\newtheorem*{theorem*}{Theorem}
\newtheorem*{definition*}{Definition}
\newtheorem*{proposition*}{Proposition}

\newcommand{\R}{\mathbb{R}}

\newcommand{\cB}{\mathcal{B}}
\newcommand{\cR}{\mathcal{R}}

\newcommand{\ong}{\mathop{\overline{\bm{\nabla}}}\nolimits}

\newcommand{\onc}{{\overline\nabla}^c}
\newcommand{\on}{{\overline\nabla}}
\newcommand{\wn}{\widetilde\nabla}
\newcommand{\wnc}{{\widetilde\nabla}^c}

\newcommand{\cD}{\mathcal{D}}
\newcommand{\cH}{\mathcal{H}}
\newcommand{\Do}{\R^{2d}}

\def\aS{\mathsf S}
\def\aZ{\mathsf Z}
\def\az{\mathsf z}

\def\aJ{\mathsf J}
\def\av{\mathsf v}

\def\cS{\mathcal S}
\def\aR{\mathsf R}

\def\aB{\mathsf B}
\def\aL{\mathsf L}

\def\cL{\mathcal{L}}

\def\aL{\mathsf L}
\def\aM{\mathsf M}
\def\aaE{\mathsf E}

\usepackage{comment}

\newcommand{\G}{\R^{3d}}
\newcommand\supp{\operatorname{Supp}}
\renewcommand{\d}{\partial} 
\newcommand{\defeq }{\mathop{=}\limits^{\textrm{def}}}
\newcommand{\dd}{{\,\rm d}}

\definecolor{gyellow}{RGB}{228, 155, 15}
\definecolor{dgreen}{RGB}{0, 100, 0}
\definecolor{bblue}{RGB}{137, 207, 240}

\newcommand {\ZH}[1]{\todo[inline,size=\footnotesize,color=gyellow]{\textbf{ZH:} #1}}

\title[GENERIC of kinetic models]{Unified Formulation and Asymptotic Limits of Inhomogeneous Kinetic Models within GENERIC}

\author[H.~Duong]{Hong Duong}
\author[Z.~He]{Zihui He}
\address[H.~Duong]
{School of Mathematics, University of Birmingham, UK}
\email{h.duong@bham.ac.uk}
\address[Z.~He]
{Fakult\"at f\"ur Mathematik, Universit\"at Bielefeld, Postfach 100131, 33501 Bielefeld, Germany}
\email{zihui.he@uni-bielefeld.de}

\date{\today}

\begin{document}
\begin{abstract}
In this paper, we study a general class of inhomogeneous kinetic models that unifies fundamental models in both the statistical physics of particles
and of waves, namely 
the kinetic Boltzmann equations and the kinetic wave equations, in both classical (non-relativistic), relativistic and quantum settings. We formulate this unified equation into the GENERIC (General Equation for Non-Equilibrium Reversible-Irreversible Coupling) framework. We then derive the grazing (small-angle) limit in two-body interaction systems, which leads to Landau-type equations.  Finally, we show that these limiting systems can also be formulated as GENERIC systems.


\end{abstract}

\maketitle

\section{Introduction}
\subsection{Inhomogeneous kinetic equations} 
In this paper, we consider a general class of Boltzmann-type inhomogeneous kinetic models of the following form:
\begin{equation}
\label{intro:eq-uni}
\left\{
\begin{aligned}
&\d_t f+\nabla_p e(p)\cdot \nabla_q f=Q(f),\\
&Q(f)=\frac{1}{ n!}\sum_{S_n}\int_{\R^{(2n-1)d}}\delta \cB \big(\Pi_{i=0}^{n-1}\gamma_i(f_{\tau(i)}')\overline{\gamma}_i (f_{\tau(i)})-\Pi_{i=0}^{n-1}\overline{\gamma}_i (f_{\tau(i)}')\gamma_i(f_{\tau(i)})\big)\dd \eta^{2n-1}.
\end{aligned}
\right.
\end{equation}
As will be shown below, equation \eqref{intro:eq-uni} unifies fundamental models in both the statistical physics of \textit{particles}
and of \textit{waves}, namely 
the kinetic Boltzmann equations and  the kinetic wave equations, in both classical (non-relativistic), relativistic and quantum settings. In the (classical, relativistic, quantum) Boltzmann equations, the  unknown $f=f(t,q,p)$ denotes the probability distribution of the (classical, relativistic, quantum) particles in the phase space at time $t$ with position $q\in \R^d$ and momentum $p\in\R^d$. On the other hand, in the kinetic wave equations, $f$ describes the wave action density (or occupation number) at time $t$, position $q$ and wavevector $p$. The dynamics \eqref{intro:eq-uni} consists of two components: a transport term and a collision term.

The linear transport
term $\nabla_p e(p)\cdot\nabla_q f$  describes the advection of the density. In this paper, we focus on the energy functions, $e=e(p)$, associated with classical Newtonian and relativistic dynamics, respectively,
\begin{align}
\label{intro-e}
e(p)=\frac{|p|^2}{2m}\quad\text{and}\quad e(p)=c\sqrt{(mc)^2+|p|^2},
\end{align}
where $m>0$ be particle mass and $c>0$ be the speed of light. In Section \ref{sub-sec:B-gen}, different energy functions are also allowed. 

The collision operator $Q=Q(f)(q,p)$ describes the variation of the number of particles/wave with position $q$ and momentum $p$, in a unit of time, due to
collisions (interactions) between $n$ particles (in particle models) or $2n$ waves (in wave models). It is obtained as the difference between the
gain and loss terms from the interactions. We now explain the precise form these terms.

Let $p_i$ and $p_i'\in \R^d$, $i=0,\dots, n-1$ denote the input and output momenta. We write 
\begin{align*}
p_0=p,~ p_0'=p',~ f_0=f(q,p_0),~ f_0'=f'(q,p_0'),~
f_i=f(q,p_i)~\text{and}~ f_i'=f(q,p_i').
\end{align*}
Let $e_i=e(p_i)$ and $e_i'=e(p_i')$. For a single interaction, the following momentum and energy conservation laws hold
\begin{align}
\label{cl}
\sum_{i=0}^{n-1}p_i=\sum_{i=0}^{n-1}p_i'\quad\text{and}\quad \sum_{i=0}^{n-1}e_i=\sum_{i=0}^{n-1}e_i'.
\end{align} 
Let $\delta^d_0$ denote the $d$-dimensional Dirac measure. The Dirac measure $\delta$ appearing in the collision operator $Q$ is defined as follows 
\begin{align*}
    \delta\defeq\delta^1_0\Big(\sum_{i=0}^{n-1}(e_i-e_i')\Big)\delta^d_0\Big(\sum_{i=0}^{n-1}(p_i-p_i')\Big).
\end{align*}
This formally enforces the conservation laws \eqref{cl} and will be made rigorous in Section \ref{sec:Boltzmann}.

For $i=0,\dots,n-1$, we take $a_i,\overline{a}_i\in\{0,1\}$ and $\alpha_i,\overline{\alpha}_i\in\{-1,0,1\}$, and define the following functions
\begin{align}
\label{gamma-i}
   \gamma_i(f)=a_i+\alpha_if\quad\text{and}\quad \overline{\gamma}_i (f)=\overline{a}_i+\overline{\alpha}_if.
\end{align}
The specific values of the parameters $a_i, \overline{a}_i, \alpha_i,\overline{\alpha}_i$ determine the specific system and are specified explicitly in Table \ref{tab:models}.

For a function $G=G\big(p_0,\cdots,p_{n-1},p_0',\cdots,p_{n-1}'\big)$, we use the following notations to denote the transformation that swaps the group of unknowns $(p_0,\dots,p_{n-1})$ and $(p_0',\dots,p_{n-1}')$ 
\[
G'=G\big(p_0',\cdots,p_{n-1}',p_0,\cdots,p_{n-1}\big).
\]
Let $\tau\in S_n$ denote a permutation of ${0,1,\dots,n-1}$. 
The kernel  $\cB:\R^{2nd}\to\R_+$ in the collision operator $Q$ is invariant under the following transformations 
\begin{equation}
\label{def:cB}
\begin{aligned}
&\cB(p,\dots,p_{n-1},p',\dots,p_{n-1}')\\
    =&{}\cB(p',\dots,p_{n-1}',p,\dots,p_{n-1})\\
    =&{}\cB(p_{\tau(0)},\dots,p_{\tau(n-1)},p_{\tau(0)}',\dots,p_{\tau(n-1)}')\quad\forall \tau\in S_n.
\end{aligned}
\end{equation}
Moreover, we assume that $\cB$ is Galilean invariant in the classical (non-relativistic) models and is Lorentz invariant in the relativistic ones. The details of Lorentz transinformation can be found in Appendix \ref{app:Lorentz}.
Let $\tau_k$ denote the permutations on $\{0,1,\dots,n-1\}$ that only swaps $0$ and $k$. We note that, by change of variables, the collision operator $Q(f)$ can also be expressed as
 \begin{equation}
 \label{andere}
\begin{aligned}
Q(f)&=\frac{1}{ n}\sum_{k=0}^{n-1}\int_{\R^{(2n-1)d}}\delta \cB \big(\Pi_{i=0}^{n-1}\gamma_i(f_{\tau_k(i)}')\overline{\gamma}_i (f_{\tau_k(i)})-\Pi_{i=0}^{n-1}\overline{\gamma}_i (f_{\tau_k(i)}')\gamma_i(f_{\tau_k(i)})\big)\,\dd\eta^{2n-1},
\end{aligned}
\end{equation}
where $\dd\eta^{2n-1}=\dd p_1\dots \dd p_{n-1}\dd p_0'\dots \dd p_{n-1}'$ denotes the Lebesgue measure on $\R^{(2n-1)d}$. 

In the subsequent analysis, it is more convenient to group the general model into three sub-classes: (quantum) Boltzmann equations, kinetic wave equations, and linear Boltzmann equations. Their collision operators are respectively given by
\begin{align}
Q_\alpha(f)&=\int_{\R^{(2n-1)d}}\delta \cB \big(\Pi_{i=0}^{n-1}f_i'(1+\alpha f_i)-\Pi_{i=0}^{n-1}(1+\alpha f_i')f_i\big),\label{intro:Q-phonon}\\
Q_{wave}(f)&=\int_{\R^{(2n-1)d}}\delta \cB \big(\Pi_{i=0}^{n-1}f_i'f_i\big)\Big(\sum_{i=0}^{n-1}(f_i)^{-1}- (f_i')^{-1}\Big),\label{intro:Q-wave}\\
Q_{linear}(f)&=\int_{\R^{(2n-1)d}}\delta \cB \sum_{i=0}^n(f_i'-f_i).
\end{align}
Notice that the collision operators are in the form of \eqref{intro:eq-uni} up to a multiplicity constant. 
The parameter $\alpha$ in the collision $Q_\alpha$ encodes the type of statistics
\[
\alpha=\begin{cases}
    1, \quad \text{Bose-Einstein statistics}\\
    -1\quad\text{Fermi-Dirac statistics}\\
    0\quad \text{Boltzmann-Maxwell statistics}.
\end{cases}
\]
The case of $\alpha=\pm1$ is also known as Uehling--Uhlenbeck equation \cite{UU33} and the Boltzmann-Nordheim equation \cite{Nor28}. The $\alpha=+1$ is also known as the (four-) phonon equation, see \cite{spohn2006phonon}. The kinetic wave equations are central equations in the theory of wave turbulence. In recent years there have been significant breakthroughs in the rigorous derivation of the wave kinetic equations from the nonlinear Schrödinger \cite{DH21,DH23,DH23b,buckmaster2021onset,hani2024inhomogeneous}. We also refer to \cite{zakharov2012kolmogorov,nazarenko2011wave} for a detailed exposition of the wave turbulence theory.

The most popular models studied in the literature are those for collisions between two particles, that is $n=2$.  In Sections \ref{sec:Boltzmann} and \ref{sec:Landau}, we will focus on these 2-body models, in which we rigorously make sense of the Dirac-measure that appears in the collision operators. Kinetic models that involve collisions of more than 2 particles have also been studied by several authors, see for instance \cite{Cer88,Ma83} for the $n$-body interaction Boltzmann equation and \cite{pavlovic2026inhomogeneous} for the 6-wave kinetic equation.

\begin{table}
\centering
\renewcommand{\arraystretch}{1.3}
\begin{tabular}{@{}lccccc@{}}
\toprule
Models (classical/relativistic) & $(a_0,\alpha_0)$ &  $(a_i,\alpha_i)$&$(\overline{a}_0,\overline{\alpha}_0)$ & $(\overline{a}_i,\overline{\alpha}_i)$  \\
 &  & $i=1,\dots,n-1$ & & $i=1,\dots,n-1$ \\
\midrule
 Quantum kinetic (Bose) & $(0,1)$ & $(0,1)$ & $(1,1)$ & $(1,1)$  \\
 Boltzmann & $(0,1)$ & $(0,1)$ & $(1,0)$ & $(1,0)$  \\
 Quantum kinetic  (Fermi)  & $(0,1)$ & $(0,1)$ & $(1,-1)$ & $(1,-1)$  \\
 Wave kinetic &  $(0,1)$ & $(0,1)$ & $(1,0)$ & $(0,1)$  \\
Linear Boltzmann  & $(0,1)$ & $(1,0)$ & $(1,0)$ & $(1,0)$ \\
\bottomrule
\end{tabular}
\caption{$n$-body Boltzmann type of equations}
\label{tab:models}
\end{table}

\subsection{The GENERIC formalism}

The GENERIC (General Equation for Non-Equilibrium Reversible-Irreversible Coupling) framework, introduced in \cite{GO97a,GO97b},  provides a systematic approach to modelling the dynamics of nonequilibrium systems by unifying reversible and irreversible processes. 
A GENERIC system describes the evolution of an unknown $\mathsf z$ in a state space $\mathsf Z$ via the equation
\begin{equation}\label{eq:GENERIC}
\partial_t \az = \aL \dd\aaE+\aM\dd \aS.
\end{equation}
In this equation, the functionals $\aaE,\aS:\aZ\to \R$ are energy and entropy functionals  respectively, and $\dd\aaE$, $\dd\aS$ are their differentials. For each $\az\in \aZ$,   $\aL(\az)$ is an antisymmetric operator satisfying the Jacobi identity (Poisson operator), while $\aM(\az)$, $\az\in \aZ$ is a symmetric and positive semi-definite operator (Onsager or dissipative operator). In addition, the following degeneracy (orthogonality) conditions are satisfied:
\begin{equation}\label{eq:degen}
\aL(\az) \dd \aS(\az)=0\quad\text{and}\quad \aM(\az) \dd\aaE(\az) = 0\quad\text{for all }\az.
\end{equation}
Note that pure Hamiltonian systems and pure (dissipative) gradient flow systems are special cases of GENERIC corresponding to $\aM\equiv 0$ and $\aL\equiv 0$, respectively. 

The conditions satisfied by the building blocks $\{\aaE, \aS, \aL,\aM\}$ ensure that along any solution to \eqref{eq:GENERIC}, the energy $\aaE$ is conserved and the entropy $\aS$ is non-decreasing. In fact, 
\begin{align*}
\frac{\dd}{\dd t} \aaE(\mathsf z_t) = \partial_t z\cdot \dd\aaE=(\aL \dd\aaE+\aM \dd\aS)\cdot \dd\aaE=\underbrace{\aL \dd\aaE\cdot \dd \aaE}_{=0}+\underbrace{\aM \dd\aS \cdot \dd \aaE}_{=0}=0,
\end{align*}
where we have used the anti-symmetry of $\aL$, and the symmetry of $\aM$ together with the second orthogonality condition. By a similar computations, we have
$$
\frac{\dd}{\dd t} \aS(\az_t) = \dd \aS\cdot \aM \dd\aS \ge0.$$
Thus, the first and second laws of thermodynamics are automatically fulfilled for GENERIC systems. 

The GENERIC system \eqref{eq:GENERIC} can be
extended to a generalized (non-quadratic) GENERIC system \cite{mielke2011formulation}
\begin{equation}
\label{GENERIC-R}
 \d_t \az=\aL\dd\aaE+\d_\xi \aR^*(\dd\aS), 
\end{equation}
where the irreversible part  $\aM\dd \aS$ in \eqref{eq:GENERIC} is replaced by $\d \aR^*(\dd\aS)$. Here $\aR^*$ is a dissipation potential, which is a convex, superlinear and even function. When $\aR^*$ is a quadratic function, $\aR^*(\xi))=\frac{1}{2}\xi^T\aM(\az) \xi$, we recover \eqref{eq:GENERIC}.

In summary, the GENERIC framework has been successfully applied across a wide range of classical, mesoscopic, and complex systems, including fluids, polymers, soft matter, and chemical reactions, and provides a versatile framework for extending kinetic and continuum descriptions while maintaining the fundamental structure of nonequilibrium thermodynamics. We refer the reader to the book \cite{Ott05} and a recent survey \cite{grmela2018generic} for an exposition of GENERIC.

\subsection{Main results of the paper} 
The aim of this paper is threefold. Firstly, we bring the two topics discussed in the previous subsections together by formulating the general class of kinetic models \eqref{intro:eq-uni} into the GENERIC framework \eqref{eq:GENERIC}, thus shedding light on the physical/thermodynamics and geometrical structure of the former. We explicitly construct the building blocks (the energy and entropy functionals, as well as the Poisson and the Onsager operators) for the unified system. Secondly, we perform a small-angle scattering limit of \eqref{intro:eq-uni} to obtain a general unified Landau-type equation. This extends the celebrated grazing limit from the classical Boltzmann equation to the classical Landau equation. Thirdly, we show that the resulting limiting systems also exhibit GENERIC structure, thus putting all of the models in the same GENERIC framework. Below we compare the present paper with existing works in these three topics.  
\subsection*{Related works}
As already mentioned, equation \eqref{intro:eq-uni} covers many fundamental models, including the kinetic Boltzmann equations and  the kinetic wave equations in both classical (non-relativistic), relativistic and quantum settings. There are huge literature on these equations in both mathematics and physics literature. We refer the reader to the monographs \cite{Cer88, cercignani2002relativistic,villani2002review} for more information about classical kinetic theory and to \cite{zakharov2012kolmogorov,nazarenko2011wave} for wave kinetic equations and wave turbulence theory. In the below we review papers that are directly relevant to our work, either on GENERIC/gradient flow formulation or on the aspect of unifying the models.

\textit{On GENERIC formulation of kinetic models}. Our present work is motivated by \cite{Ott97, grmela2018generic}, which casts the classical kinetic Boltzmann equation into the GENERIC framework, and by our recent work \cite{duong2025generic} in which we study the GENERIC structure and small-angle limits for the classical 3-wave and 4-wave kinetic equations. In recent years, there has been a considerable progress on rigorously proving the GENERIC/gradient flow structure for classical kinetic models, see \cite{Erb23,carrillo2024landau} for the spatially homogeneous Boltzmann and Landau equations and \cite{EH25,DuongHe2025} for the corresponding fuzzy models. The present work generalises \cite{Ott97, grmela2018generic,duong2025generic} by formally formulating the classical, relativistic and quantum kinetic models, as well as the corresponding limiting systems under the grazing (small-scattering limit, see the next point) into the GENERIC framework using the unified form \eqref{intro:eq-uni}. In particular, we extends the compatibility condition in \cite{PRST20,Erb23,DGH25}, see \eqref{intro-comptb} below, that enables us to 
recast \eqref{intro:eq-uni} into the GENERIC form \eqref{eq:GENERIC}. 

\textit{On the grazing (small-scattering) limit.} The grazing limit from the Boltzmann to the Landau equation has been studied extensively by many authors, see for instance \cite{villani1998new,alexandre2004landau,carrillo2022boltzmann} for the classical Boltzmann equation, \cite{Belyaev1956,HJ24} for the relativistic Boltzmann one, and \cite{DGH25} for the fuzzy Boltzmann equation. The semi-classical limit from quantum Boltzmann equations to quantum Landau equations in the non-relativistic setting has also been studied in the literature, see for instance \cite{he2021semi, he2024semi, gualdani2025quantum}. In this paper, we perform this limit in a unified manner via the unified equation \eqref{intro:eq-uni}. In particular, as a consequence, to the best of our knowledge, the derivation of the small-angle limit in the relativistic quantum Boltzmann equations in Section \ref{sec:Landau} of this paper is new. 

\textit{On unified treatments of various kinetic models.} There exists several papers that treat various kinetic models in a unified manner. The most relevant papers to us include \cite{spohn2006phonon,escobedo2003homogeneous,EGLM25}, in which \cite{spohn2006phonon,EGLM25} studies the phonon Boltzmann equation and kinetic wave equations while \cite{escobedo2003homogeneous} investigates quantum, relativistic or non-relativistic, Boltzmann equations. However, although the conservation of energy and entropy dissipation have been discussed, these papers do not reveal their variational GENERIC structures (in particular, the construction of the dissipative operators) as in this paper. 
\subsection{Organisation of the paper}

In Section \ref{sec:Boltzmann}, we focus on two-body interaction Boltzmann-type of equations. We summarise the parametrisations that respect the momentum and energy conservation laws \eqref{cl} in both non-relativistic and relativistic cases. Moreover, we redefine the discrete gradient to incorporate these conservation laws, which leads to additional GENERIC building blocks compared to \eqref{EL}-\eqref{MLL}. 

In Section \ref{sec:Landau}, we study the small-angle (grazing) limit of both the non-relativistic and relativistic Boltzmann-type equations discussed in Section \ref{sec:Boltzmann}. We derive the resulting Landau-type equations, and construct their GENERIC building blocks.

\section{Compatibility condition and GENERIC formulation of \texorpdfstring{\eqref{intro:eq-uni}}{}}
\label{sub-sec:B-gen}
In this section, we formulate \eqref{intro:eq-uni} into the GENERIC framework \eqref{eq:GENERIC} by explicitly constructing the building blocks $\aaE,\aS,\aL,\aM$, where we take $\aaE,\aS$ to be the physically relevant energy and entropy associated to each system and the reversible part $\aL\dd\aaE$ corresponds to the transport part of \eqref{intro:eq-uni}. The major challenge is to construct the dissipative operator $\aM$. To this end, we will establish a compatibility condition that enables the reformulation of \eqref{intro:eq-uni} into the form of \eqref{eq:GENERIC}.
\subsection{Compatibility condition}
We define a discrete gradient operator for $\phi=\phi(q,p)$ as follows
\begin{align}
\label{def:grad-free}
    \ong \phi(q,p,p_1,\ldots, p_{n-1}, p_0',p_1',\ldots, p'_{n-1})=\sum_{i=0}^{n-1}\phi_i'-\phi_i,
\end{align}
where we recall the notations that $\phi_i'=\phi(q,p_i'), ~\phi_i=\phi(q,p_i)$. We define the associated discrete divergence operator, $\ong\cdot G$,  for any $G=G(q,p,\dots,p_{n-1},p_0',\dots,p_{n-1}')$ via  the following integration by parts formula 
\begin{align*}
    \int_{\R^{(2n+1)d}}G\cdot \ong \phi\dd q\dd p\dd \eta^{2n-1}=-\int_{\R^{2d}}\ong\cdot G \phi\dd q\dd p,
\end{align*}
where $\dd\eta^{2n-1}=\dd p_1\dots \dd p_{n-1}\dd p_0'\dots \dd p_{n-1}'$ denotes the Lebesgue measure on $\R^{(2n-1)d}$.
By direct computations, it follows that $\ong\cdot G=\ong\cdot G(q,p)$ can be expressed explicitly by
\begin{equation}
\label{**}
\begin{aligned}
 \ong\cdot G(q,p)&=\frac{1}{(n-1)!}\sum_{S_n}\int_{\R^{(2n-1)d}}G\circ\tau-G'\circ \tau\\
 &=\sum_{k=0}^{n-1}\int_{\R^{(2n-1)d}}G\circ\tau_k-G'\circ \tau_k
\end{aligned}
\end{equation}
where $\tau_k$ is given as in \eqref{andere}. In the above, we define
\begin{align*}
    G\circ\tau:=G\big(p_{\tau(0)},\dots,p_{\tau(n-1)},p_{\tau(0)}',\dots,p_{\tau(n-1)}'\big).
\end{align*}
We also consider a weight function $\Theta:\R^{2n}\to\R_+$ which is a $1$-homogeneous concave function. Moreover, $\Theta$ is assumed to be invariant under the transformations
\begin{align*}
 &\Theta(f,\dots,f_{n-1},f',\dots,f_{n-1}')\\
 =&{}\Theta(f',\dots,f_{n-1}',f,\dots,f_{n-1})\\
 =&{}\Theta(f_{\tau(0))},\dots,f_{\tau(n-1))},f_{\tau(0))}',\dots,f_{\tau(n-1))}')\quad\forall \tau\in S_{n}.
\end{align*}
For a simplicity of notations, we write in short-hand $ \Theta(f,\dots,f_{n-1},f',\dots,f_{n-1}')=\Theta(f)$.

Let $\Psi^*\in C^\infty(\R;\R_+)$ be a convex, superlinear,  even, and $\Psi^*(0)=0$. We define a dissipation potential $\cR^*$ by 
\begin{gather*}
\cR^*(f,\av)=\int_{\R^{(2n-1)d}}\Psi^*(\ong \av)\Theta(f)\cB \delta \dd\eta^{2n-1} .
\end{gather*}
Formally, the Gateaux derivative, $\d \cR^*:=\d_\av \cR^*$, is given by \begin{align}
\label{def:dd-aR}
    \d_\av \cR^*(f,\av)=-\ong\cdot\big(\cB\delta \Theta(f) (\Psi^*)'(\ong \av)\big).
\end{align}
Indeed, for any $\phi\in C^\infty_c$, we have 
\begin{align*}
 \langle \d_\av \cR^*(f,\av),\phi\rangle
 =&{}
\lim_{\varepsilon\to0}\varepsilon^{-1}\Big(\cR^*(f,\av+\varepsilon\phi)-\cR^*(f,\av)\Big)\\
=&{}\int_{\R^{2nd}}\ong\phi\cB \delta\Theta(f)\big(\Psi^*\big)'(\ong \av) \\ 
=&{}-\int_{\R^{2d}}\phi\ong\cdot\Big(\cB \delta\Theta(f)\big(\Psi^*\big)'(\ong \av)\Big). 
\end{align*}
Let $h\in C^1(\R)$. We say $\big(\gamma_i,\overline{\gamma}_i ,\kappa,\Psi^*\Theta,h\big)$ are compatible if the following compatibility condition holds
\begin{equation}
\label{intro-comptb}
        n (\Psi^*)' \big(\ong h'(f)\big) \Theta (f)=\sum_{k=0}^{n-1}   \Pi_{i=0}^{n-1}\gamma_i(f_{\tau_k(i)}')\overline{\gamma}_i (f_{\tau_k(i)})-\Pi_{i=0}^{n-1}\overline{\gamma}_i (f_{\tau_k(i)}')\gamma_i(f_{\tau_k(i)}),
\end{equation}
where $\tau_k$ is given as in \eqref{andere} denoting the permutation on $\{0,1,\dots,n-1\}$ that only swaps $0$ and $k$.
This extends the compatibility condition in \cite{PRST20,Erb23,DGH25}, which is introduced for 2-body interacting collision operators including the classical Boltzmann equation and its fuzzy counterpart. The dissipation potential $\Psi^*$, the weighted function $\Theta$ and the entropy density $h$ that satisfy the above compatibility condition for the corresponding models are detailed in Table \ref{tab:comptb}. Note that in Table \ref{tab:comptb},  $\cL(s,t)=\frac{s-t}{\log s-\log t}$ denotes the logarithm mean of $s,t>0$. 

Under the compatibility condition \eqref{intro-comptb}, the equation \eqref{intro:eq-uni} can be written as 
\begin{equation}
\label{uni-eq}
\begin{aligned}
\d_t f+\nabla_p e(p)\cdot \nabla_q f=-\frac{1}{2n} \d \aR^*\big(f,h'(f)\big),
\end{aligned}
\end{equation}
since, by definition of the collision operator 
\begin{equation*}
\begin{aligned}
Q(f)
&\overset{\eqref{andere}}{=}\frac{1}{n}\sum_{k=0}^{n-1}\int_{\R^{(2n-1)d}}\delta \cB \big(\Pi_{i=0}^{n-1}\gamma_i(f_{\tau_k(i)}')\overline{\gamma}_i (f_{\tau_k(i)})-\Pi_{i=0}^{n-1}\overline{\gamma}_i (f_{\tau_k(i)}')\gamma_i(f_{\tau_k(i)})\big)\\
&\overset{\eqref{**}}{=}\frac{1}{2n^2}\ong\cdot\Big(\sum_{k=0}^{n-1}\delta \cB \big(\Pi_{i=0}^{n-1}\gamma_i(f_{\tau_k(i)}')\overline{\gamma}_i (f_{\tau_k(i)})-\Pi_{i=0}^{n-1}\overline{\gamma}_i (f_{\tau_k(i)}')\gamma_i(f_{\tau_k(i)})\big)\Big)
\\&\overset{\eqref{intro-comptb}}{=}\frac{1}{2n}\ong\cdot\Big(\delta\cB(\Psi^*)' \big(\ong h'(f)\big) \Theta (f)\Big)\\
&\overset{\eqref{def:dd-aR}}{=}-\frac{1}{2n} \d \aR^*\big(f,h'(f)\big).
\end{aligned}
\end{equation*}
The equation \eqref{uni-eq} has the following weak formulation
\begin{align*}
&\int_{\Do}\varphi_0f_0\dd p\dd q-\int_0^T\int_{\Do}\big(\d_t+\nabla_p e\cdot\nabla_q\big)\varphi f\dd p \dd q\dd t\\
&=-\frac{1}{2n}\int_0^T\int_{\R^{(2n+1)d}}\delta \cB \Theta(f)\ong \phi (\Psi^*)'\big(\ong h'(f)\big)\dd q\dd p \dd\eta^{2n-1}\dd t.
\end{align*}
The equation \eqref{uni-eq} is associated with a dissipative entropy
\begin{equation}
\label{eq: entropy fcn}
    \cH(f)=\int_{\Do} h(f)\dd q\dd p\quad \text{and}\quad \dd\cH(f)=h'(f),
\end{equation}
since, at least formally, we have
\begin{align*}
\label{cH-uni}
    \frac{d}{dt}\cH(f)=-\int_{\R^{(2n+1)d}}\ong h'(f)(\Psi^*)'\big(\ong h'(f)\big)\Theta \cB\delta\dd\eta^{2n-1}\dd p\dd q\le0,
\end{align*}
where we have used the property that $\Psi^*$ is convex, non-negative and $\Psi^*(0)=0$ to get
 \begin{equation*}
 r(\Psi^*)'(r)\ge0\quad \text{for all}\quad  r\in\R.    
 \end{equation*}
It follows from the definition of the discrete gradient operator that
\begin{align*}
\delta\ong (1,p,e)=0.    
\end{align*}
As a consequence, the following mass, momentum and energy conservation laws hold, at least formally,
\begin{equation*}
\label{cl-eq}
\int_{\Do} f_t (1,p,e)   \dd p\dd q=\int_{\Do} f_0 (1,p,e)   \dd p\dd q\quad\forall t\in[0,T].
\end{equation*}
\subsection{GENERIC structure}
Equation \eqref{uni-eq} can be recast into the GENERIC framework, with the GENERIC building block $\{\aaE,\,\aS,\,\aL,\,\d\aR^*\}$ are constructed as follows. The energy and entropy functionals are respectively given by 
\begin{equation}
    \label{EL}
\begin{aligned}
&\aaE(f)=\int_{\R^{2d}} e(p) f\dd p\dd q\quad\text{and}\quad \aS(f)=-\cH(f).
\end{aligned}
\end{equation}
The operators $\aL$ and $\d\aR^*$ at $f\in\aZ$ by
\begin{equation}
    \label{MLL}
\begin{aligned}
&\aL(f)\xi=-\nabla\cdot(f \aJ\nabla \xi),\quad \aJ=\begin{pmatrix}
    0& \mathsf{id}_d\\
    -\mathsf{id}_d&0
\end{pmatrix},\\
    &\d \cR^*(f,\xi)=-\frac{1}{2n}\ong \cdot\big( \cB\delta \Theta(f) (\Psi^*)'(\ong  \xi)\big).
\end{aligned}
\end{equation}
for all $\xi\in \aZ$, where $\nabla =(\nabla_q,\nabla_{ p})^T$ denotes the traditional gradient operator. We consider the phase space $\aZ$ to be appropriated functional space endowed with the $L^2$-inner product $\langle f,g\rangle=\int_{\R^{2d}}fg\dd v\dd x$. The admissible triples $(\Psi^*, \Theta, h)$, that satisfy the compatibility condition \eqref{intro-comptb}, are shown in Table \ref{tab:comptb}. By direct calculations, one can check that the building blocks \eqref{EL}–\eqref{MLL} lead to the GENERIC system \eqref{uni-eq}. Moreover, in the quadratic case $\Psi^*(r)=\frac{r^2}{2}$, the degeneracy condition \eqref{eq:degen} holds as a consequence of the antisymmetric structure of $\aL$ and the energy conservation law \eqref{cl}.

\begin{table}
\centering
\renewcommand{\arraystretch}{1.3}
\begin{tabular}{@{}lccccc@{}}
\toprule
Models & $\Psi^*(r)$ & $\Theta(f)$ &  $h'(f)$  \\
\midrule
  (Quantum) Boltzmann & $4(\cosh(r/2)-1)$   & $\Pi_{i=0}^{n-1}\sqrt{f_i}$ & $\log \frac{f}{1+\alpha f}$  \\
  & $r^2/2$   & $\cL\Big(\Pi_{i=0}^{n-1}f'_i(1+\alpha f_i),\Pi_{i=0}^{n-1}f_i(1+\alpha f_i')\Big)$ & $\log \frac{f}{1+\alpha f}$   \\
 Wave kinetic & $r^2/2$ &  $\Pi_{i=0}^{n-1}f_if_i'$ & $-f^{-1}$   \\
Linear Boltzmann  & $r^2/2$ & $1$ & $f$  \\
\bottomrule
\end{tabular}
\caption{Compatibility conditions for Boltzmann type of equations}
\label{tab:comptb}
\end{table}
In Table \ref{tab:comptb}, the entropy density for the (quantum) Boltzmann, wave kinetic and linear Boltzmann equations are given respectively by 
\begin{equation}
\label{H-all}
\begin{gathered}
h_\alpha(f)=\left\{ \begin{aligned} 
             &f\log f - f \quad  (\text{Maxwell}) \\ 
             &f\log f - (1+f)\log(1+f) \quad  (\text{Bose})\\ 
             &f\log f + (1-f)\log(1-f) \quad  (\text{Fermi}), \end{aligned}
             \right.\\
       h_{wave}(f)=-\log f\quad\text{and}\quad  h_{linear}(f)=\frac{f^2}{2}.
\end{gathered}
       \end{equation}
In the Fermi case, we take $h_{-1}(f)=+\infty$ in the case of $f\notin[0,1]$. We recall that $\cH(f)$ is defined in \eqref{eq: entropy fcn}
in the case that $\max\big(h'(f),0\big)$ is integrable (otherwise, we take $\cH(f)=+\infty$).

As shown in Table \ref{tab:comptb}, the wave kinetic equation and the linear Boltzmann equation admits a quadratic GENERIC formulation, correspond respectively, to
\begin{align*}
&\text{WKE}:\quad \Psi^*(r)=r^2/2,~ \Theta(f)=\prod_{i=0}^{n-1}f_if_i',~h'(f)=-\frac{1}{f},\\
&\text{Linear Boltzmann}:\quad \Psi^*(r)=r^2/2,~ \Theta(f)=1,~h'(f)=f.    
\end{align*}
However, it is interesting to note that the (quantum) Boltzmann equations can be written as both quadratic and non-quadratic (more precisely, a $\cosh$ function) GENERIC formalism, corresponding to two different admissible triples of $(\Psi^*, \Theta, h)$ (see Table \ref{tab:comptb})
\begin{align*}
& \Psi^*(r)=4\Big(\cosh(r/2)-1\Big), ~\Theta(f)=\prod_{i=0}^{n-1}\sqrt{f_i},~h'(f)=\log\frac{f}{1+\alpha f},\\
& \Psi^*(r)=r^2/2, ~\Theta(f)=\cL\Big(\Pi_{i=0}^{n-1}f'_i(1+\alpha f_i),\Pi_{i=0}^{n-1}f_i(1+\alpha f_i')\Big),~h'(f)=\log\frac{f}{1+\alpha f}.
\end{align*}
The $\cosh$-gradient flow structure for jump processes has received considerable attention in recent years due to the interesting fact that they often arise from the large deviation principle of underlying stochastic processes, see for instance for the classical Boltzmann equation \cite{leonard1995large,rezakhanlou1998large,bouchet2020boltzmann,BBBO21, bodineau2023statistical,feliachi2024dynamical,feliachi2021dynamical,bodineau2023statistical}.
We refer the readers to \cite{PRST20,peletier2023cosh,DGH25} and references therein for more detailed discussions on the non-quadratic pairs.

\subsection{Relations between the models}
\label{sub-sec:limit}

\tikzset{every picture/.style={line width=0.6pt}} 

\begin{figure}[htbp]
  \centering
\begin{tikzpicture}[x=0.6pt,y=0.6pt,yscale=-1,xscale=1]

\draw    (325.3,112.45) -- (325.82,265.41) ;
\draw [shift={(325.83,268.41)}, rotate = 269.81] [fill={rgb, 255:red, 0; green, 0; blue, 0 }  ][line width=0.08]  [draw opacity=0] (14.29,-6.86) -- (0,0) -- (14.29,6.86) -- cycle    ;
\draw    (46.4,194.03) -- (47.32,347.33) ;
\draw [shift={(47.33,350.33)}, rotate = 269.66] [fill={rgb, 255:red, 0; green, 0; blue, 0 }  ][line width=0.08]  [draw opacity=0] (14.29,-6.86) -- (0,0) -- (14.29,6.86) -- cycle    ;
\draw    (170.92,167.59) -- (350.94,166.66) ;
\draw [shift={(353.94,166.65)}, rotate = 179.7] [fill={rgb, 255:red, 0; green, 0; blue, 0 }  ][line width=0.08]  [draw opacity=0] (14.29,-6.86) -- (0,0) -- (14.29,6.86) -- cycle    ;
\draw    (137.5,152.47) -- (273.75,91.31) ;
\draw [shift={(276.49,90.09)}, rotate = 155.83] [fill={rgb, 255:red, 0; green, 0; blue, 0 }  ][line width=0.08]  [draw opacity=0] (14.29,-6.86) -- (0,0) -- (14.29,6.86) -- cycle    ;
\draw    (137.5,157.67) .. controls (142.28,160.98) and (141.74,167.59) .. (170.92,167.59) ;
\draw    (140.95,181.77) .. controls (142.81,181.3) and (145.99,169.01) .. (170.92,167.59) ;
\draw    (173.04,358.76) -- (352,358.76) ;
\draw [shift={(355,358.76)}, rotate = 180] [fill={rgb, 255:red, 0; green, 0; blue, 0 }  ][line width=0.08]  [draw opacity=0] (14.29,-6.86) -- (0,0) -- (14.29,6.86) -- cycle    ;
\draw    (139.62,343.64) -- (275.87,282.48) ;
\draw [shift={(278.61,281.25)}, rotate = 155.83] [fill={rgb, 255:red, 0; green, 0; blue, 0 }  ][line width=0.08]  [draw opacity=0] (14.29,-6.86) -- (0,0) -- (14.29,6.86) -- cycle    ;
\draw    (139.62,348.84) .. controls (144.4,352.15) and (143.87,358.76) .. (173.04,358.76) ;
\draw    (143.07,372.94) .. controls (144.93,372.47) and (148.11,360.18) .. (173.04,358.76) ;
\draw    (392,197.6) -- (392.23,204.6) -- (392.01,326.6) ;
\draw [shift={(392,329.6)}, rotate = 270.1] [fill={rgb, 255:red, 0; green, 0; blue, 0 }  ][line width=0.08]  [draw opacity=0] (14.29,-6.86) -- (0,0) -- (14.29,6.86) -- cycle    ;
\draw    (375,87) .. controls (389,110) and (418,132) .. (492,132) ;
\draw    (361,279) .. controls (375,302) and (404,324) .. (478,324) ;
\draw    (449,163) .. controls (456,151) and (477,132) .. (492,132) ;
\draw    (435,355) .. controls (442,343) and (463,324) .. (478,324) ;
\draw    (492,132) -- (545,132) ;
\draw [shift={(548,132)}, rotate = 180] [fill={rgb, 255:red, 0; green, 0; blue, 0 }  ][line width=0.08]  [draw opacity=0] (14.29,-6.86) -- (0,0) -- (14.29,6.86) -- cycle    ;
\draw    (478,324) -- (549,324) ;
\draw [shift={(552,324)}, rotate = 180] [fill={rgb, 255:red, 0; green, 0; blue, 0 }  ][line width=0.08]  [draw opacity=0] (14.29,-6.86) -- (0,0) -- (14.29,6.86) -- cycle    ;
\draw    (602,172.6) -- (601.18,274.58) -- (601.03,291.6) ;
\draw [shift={(601,294.6)}, rotate = 270.52] [fill={rgb, 255:red, 0; green, 0; blue, 0 }  ][line width=0.08]  [draw opacity=0] (14.29,-6.86) -- (0,0) -- (14.29,6.86) -- cycle    ;

\draw (325.62,89.08) node  [font=\normalsize,xscale=0.8,yscale=0.8]  {$ \begin{array}{l}
\mathrm{relativistic\ }\\
\mathrm{WKE}
\end{array}$};
\draw (48.02,168.23) node  [rotate=-359.8,xscale=0.8,yscale=0.8]  {$ \begin{array}{l}
\mathrm{relativistic\ }\\
\mathrm{quantum}\\
\mathrm{Boltzmann}
\end{array}$};
\draw (398.66,162.28) node  [rotate=-359.8,xscale=0.8,yscale=0.8]  {$ \begin{array}{l}
\mathrm{relativistic\ }\\
\mathrm{Boltzmann}
\end{array}$};
\draw (318.87,278.01) node  [rotate=-359.8,xscale=0.8,yscale=0.8]  {$\mathrm{WKE}$};
\draw (47.74,359.26) node  [rotate=-359.8,xscale=0.8,yscale=0.8]  {$ \begin{array}{l}
\mathrm{quantum}\\
\mathrm{Boltzmann}
\end{array}$};
\draw (399.59,358.87) node  [rotate=-359.8,xscale=0.8,yscale=0.8]  {$\mathrm{Boltzmann}$};
\draw (151.33,123.46) node [anchor=north west][inner sep=0.75pt]  [rotate=-335.76,xscale=0.8,yscale=0.8] [align=left] {\textcolor[rgb]{0.96,0.65,0.14}{kinetic limit }};
\draw (173.42,148.98) node [anchor=north west][inner sep=0.75pt]  [xscale=0.8,yscale=0.8] [align=left] {\textcolor[rgb]{0.29,0.56,0.89}{semi-classical limit }};
\draw (372.99,313.18) node [anchor=north west][inner sep=0.75pt]  [rotate=-270,xscale=0.8,yscale=0.8] [align=left] {\textcolor[rgb]{0.72,0.91,0.53}{Newtonian limit }};
\draw (307.12,242.48) node [anchor=north west][inner sep=0.75pt]  [rotate=-270,xscale=0.8,yscale=0.8] [align=left] {\textcolor[rgb]{0.72,0.91,0.53}{Newtonian \ limit }};
\draw (93.28,145.98) node [anchor=north west][inner sep=0.75pt]  [xscale=0.8,yscale=0.8] [align=left] {(Bose)};
\draw (93.4,170.55) node [anchor=north west][inner sep=0.75pt]  [xscale=0.8,yscale=0.8] [align=left] {(Fermi)};
\draw (165.5,312.04) node [anchor=north west][inner sep=0.75pt]  [rotate=-335.37,xscale=0.8,yscale=0.8] [align=left] {\textcolor[rgb]{0.96,0.65,0.14}{ kinetic limit }};
\draw (172.62,339.21) node [anchor=north west][inner sep=0.75pt]  [xscale=0.8,yscale=0.8] [align=left] {\textcolor[rgb]{0.29,0.56,0.89}{semi-classical limit }};
\draw (95.4,337.15) node [anchor=north west][inner sep=0.75pt]  [xscale=0.8,yscale=0.8] [align=left] {(Bose)};
\draw (95.53,361.72) node [anchor=north west][inner sep=0.75pt]  [xscale=0.8,yscale=0.8] [align=left] {(Fermi)};
\draw (604.66,129.28) node  [rotate=-359.8,xscale=0.8,yscale=0.8]  {$ \begin{array}{l}
\mathrm{relativistic}\\
\mathrm{linear\ }\\
\mathrm{Boltzmann}
\end{array}$};
\draw (607.66,329.28) node  [rotate=-359.8,xscale=0.8,yscale=0.8]  {$ \begin{array}{l}
\mathrm{linear}\\
\mathrm{Boltzmann}
\end{array}$};
\draw (457,112) node [anchor=north west][inner sep=0.75pt]  [xscale=0.8,yscale=0.8] [align=left] {\textcolor[rgb]{0.74,0.06,0.88}{linear limit}};
\draw (459,305) node [anchor=north west][inner sep=0.75pt]  [xscale=0.8,yscale=0.8] [align=left] {\textcolor[rgb]{0.74,0.06,0.88}{linear limit}};
\draw (581.99,279.18) node [anchor=north west][inner sep=0.75pt]  [rotate=-270,xscale=0.8,yscale=0.8] [align=left] {\textcolor[rgb]{0.72,0.91,0.53}{Newtonian limit }};
\draw (28.12,323.48) node [anchor=north west][inner sep=0.75pt]  [rotate=-270,xscale=0.8,yscale=0.8] [align=left] {\textcolor[rgb]{0.72,0.91,0.53}{Newtonian \ limit }};

\end{tikzpicture}

  \caption{$n$-body interaction Boltzmann type of equations}
  \label{fig:Boltzmann}
\end{figure}
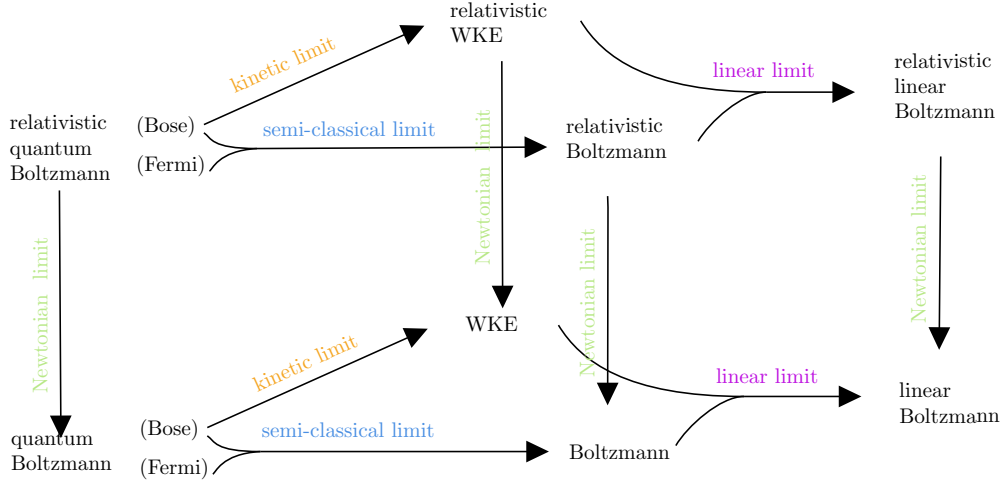

The Boltzmann-type equations listed in Table \ref{tab:models} are connected through semi-classical, kinetic, Newtonian, and linear limits. In this subsection, we review these limits, see Figure \ref{fig:Boltzmann} for an illustrative summary.



\subsection*{Newtonian limit} The non-relativistic models can be derived from the relativistic ones in the Newtonian limit, that is when the speed of light tends to infinity, see for instance \cite{strain2010global, HJ24}. In the non-relativistic and relativistic settings, the energy are respectively given by \eqref{intro-e},
\begin{align*}
e(p)=\frac{|p|^2}{2m}\quad\text{and}\quad e(p)=c p_0,\quad \text{where}\quad  p_0:=\sqrt{(mc)^2+|p|^2}.
\end{align*}
The corresponding non-relativistic and relativistic equations can be written as
\begin{align*}
\d_t f+\frac{p}{m}\cdot\nabla_q f=Q(f)\quad\text{and}\quad \d_t f+\frac{c p}{p_0}\cdot\nabla_q f=Q^c(f),
\end{align*}
where the interaction operators depend on the non-relativistic and relativistic collision kernel $\mathcal{B}$ and $\mathcal{B}^c$, respectively (see Section \ref{sec:Boltzmann} below). In the Newtonian limit, $c\to+\infty$, the relativistic transport term  $\frac{c p}{p_0}\cdot\nabla_q$ converges to the non-relativistic transport term $\frac{p}{m}\cdot\nabla_q$. The convergence of the interaction operator $Q^c(f)\to Q(f)$ will be discussed in detail, for the case $n=2$, in Section \ref{sec:limit-n=2}.
\subsection*{Semi-classical limits} Let $\hbar\in(0,1)$ be the Planck constant. We consider the following scaled quantum Bose ($\alpha=1$)/Fermi ($\alpha=-1$) equation
 \begin{equation*}
 \begin{aligned}
    &\d_t f+\nabla_pe(p)\cdot \nabla_q f\\
    =&{}\int_{\R^{(2n-1)d}}\delta \cB \big(\Pi_{i=0}^{n-1}f'_i(1+\hbar \alpha f_i)-\Pi_{i=0}^{n-1}f_i(1+\hbar \alpha f_i')\big)\dd\eta^{2n-1}
\end{aligned}
\end{equation*}
The semi-classical limit is the limit when the Planck constant tends to zero, $\hbar\to 0$. In this limit, the quantum (relativistic/non-relativistic) Boltzmann equations converge to classical (relativistic and non-relativistic) ones. 

\subsection*{Kinetic limit}
Let $\varepsilon\in(0,1)$. Let $f^\varepsilon= f(\varepsilon^{-(n-1)}t,\varepsilon^{-(n-1)}q,p)$ be a solution to the scaled Bose equation  
\begin{equation*}
 \begin{aligned}
    &\d_t f+\nabla_pe(p)\cdot \nabla_q f\\
    =&{}\int_{\R^{(2n-1)d}}\delta \cB \big(\Pi_{i=0}^{n-1}f'_i(1+\varepsilon^{-1} f_i)-\Pi_{i=0}^{n-1}f_i(1+\varepsilon^{-1} f_i')\big)\dd\eta^{2n-1}.
\end{aligned}
\end{equation*}
The kinetic limit corresponds to passing $\varepsilon\to 0$. In this limit, the quantum (relativistic/non-relativistic) Boltzmann equations to the (relativistic/non-relativistic) kinetic wave equations, see for instance~ \cite{spohn2006phonon,zakharov2012kolmogorov}. 
\subsection*{Linear limit} Let $\varepsilon\in(0,1)$. Let $f^\varepsilon= \varepsilon f(\varepsilon^{-1}t,\varepsilon^{-1}q,p)$. Let $g^\varepsilon=1+f^\varepsilon$ be a solution to the Boltzmann equation (\eqref{intro:eq-uni}-\eqref{intro:Q-phonon} with $\alpha=0$) or the wave kinetic equation \eqref{intro:eq-uni}-\eqref{intro:Q-wave}.
In the linear limit as $\varepsilon\to 0$, the perturbation equation of $f$ converges to the linear Boltzmann equations. The perturbation around the Maxwellian equilibrium was studied in the context of hydrodynamic limits for the Boltzmann equation, see for example
\cite{GSR04}. Here, instead, we consider a perturbation around 
$1$, which is permutation-invariant and therefore admits a GENERIC formulation.

\subsection{Grazing limits}
The limits discussed in the previous subsections concern the relations between Boltzmann-type equations at different physical descriptions, namely quantum, classical and relativistic settings. For two-body interaction systems, another important limit that has been studied extensively in the literature is the so-called grazing limit, that is when the angle of collisions tends to zero. In this limit, a Boltzmann-type equation converges to a corresponding Landau-type equation. These limits are summarised in Figure \ref{fig:Landau}. The detailed two-body Boltzmann-type equations will be presented in Section \ref{sec:Boltzmann}, while the small-angle limit and the resulting Landau-type equations will be discussed in Section \ref{sec:Landau}. In particular, we will show that these Landau-type equations are also GENERIC systems.



\tikzset{every picture/.style={line width=0.6pt}} 

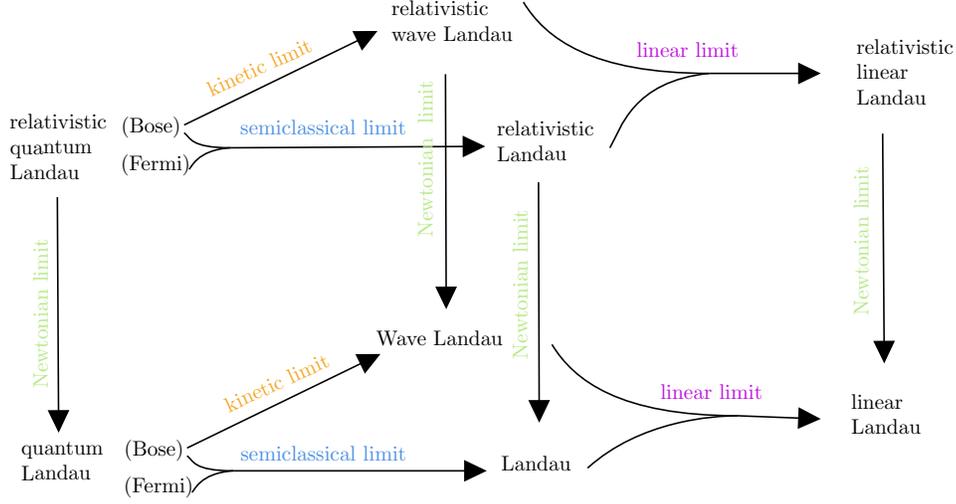
\begin{figure}
\begin{tikzpicture}[x=0.6pt,y=0.6pt,yscale=-1,xscale=1]

\draw    (313.89,83.49) -- (314.34,228.81) ;
\draw [shift={(314.35,231.81)}, rotate = 269.82] [fill={rgb, 255:red, 0; green, 0; blue, 0 }  ][line width=0.08]  [draw opacity=0] (14.29,-6.86) -- (0,0) -- (14.29,6.86) -- cycle    ;
\draw    (69.2,161.07) -- (70,306.72) ;
\draw [shift={(70.02,309.72)}, rotate = 269.68] [fill={rgb, 255:red, 0; green, 0; blue, 0 }  ][line width=0.08]  [draw opacity=0] (14.29,-6.86) -- (0,0) -- (14.29,6.86) -- cycle    ;
\draw    (178.44,129.94) -- (336,129.05) ;
\draw [shift={(339,129.04)}, rotate = 179.68] [fill={rgb, 255:red, 0; green, 0; blue, 0 }  ][line width=0.08]  [draw opacity=0] (14.29,-6.86) -- (0,0) -- (14.29,6.86) -- cycle    ;
\draw    (149.13,115.55) -- (268.36,57.54) ;
\draw [shift={(271.06,56.23)}, rotate = 154.05] [fill={rgb, 255:red, 0; green, 0; blue, 0 }  ][line width=0.08]  [draw opacity=0] (14.29,-6.86) -- (0,0) -- (14.29,6.86) -- cycle    ;
\draw    (149.13,120.5) .. controls (153.31,123.64) and (152.85,129.94) .. (178.44,129.94) ;
\draw    (152.15,143.42) .. controls (153.78,142.97) and (156.57,131.29) .. (178.44,129.94) ;
\draw    (180.31,333.74) -- (336.94,333.74) ;
\draw [shift={(339.94,333.74)}, rotate = 180] [fill={rgb, 255:red, 0; green, 0; blue, 0 }  ][line width=0.08]  [draw opacity=0] (14.29,-6.86) -- (0,0) -- (14.29,6.86) -- cycle    ;
\draw    (150.99,319.35) -- (270.22,261.34) ;
\draw [shift={(272.92,260.03)}, rotate = 154.05] [fill={rgb, 255:red, 0; green, 0; blue, 0 }  ][line width=0.08]  [draw opacity=0] (14.29,-6.86) -- (0,0) -- (14.29,6.86) -- cycle    ;
\draw    (150.99,324.3) .. controls (155.18,327.44) and (154.71,333.74) .. (180.31,333.74) ;
\draw    (154.01,347.22) .. controls (155.64,346.77) and (158.43,335.08) .. (180.31,333.74) ;
\draw    (372.54,151.67) -- (372.97,300.38) ;
\draw [shift={(372.98,303.38)}, rotate = 269.83] [fill={rgb, 255:red, 0; green, 0; blue, 0 }  ][line width=0.08]  [draw opacity=0] (14.29,-6.86) -- (0,0) -- (14.29,6.86) -- cycle    ;
\draw    (363,38) .. controls (377,61) and (406,83) .. (480,83) ;
\draw    (381,254) .. controls (395,277) and (424,299) .. (498,299) ;
\draw    (417.5,130) .. controls (424.5,118) and (431.5,88) .. (480,83) ;
\draw    (403.5,332) .. controls (436.5,302) and (483,299) .. (498,299) ;
\draw    (480,83) -- (547.5,83) ;
\draw [shift={(550.5,83)}, rotate = 180] [fill={rgb, 255:red, 0; green, 0; blue, 0 }  ][line width=0.08]  [draw opacity=0] (14.29,-6.86) -- (0,0) -- (14.29,6.86) -- cycle    ;
\draw    (498,299) -- (547.5,300.89) ;
\draw [shift={(550.5,301)}, rotate = 182.18] [fill={rgb, 255:red, 0; green, 0; blue, 0 }  ][line width=0.08]  [draw opacity=0] (14.29,-6.86) -- (0,0) -- (14.29,6.86) -- cycle    ;
\draw    (589.5,121) -- (590.48,263) ;
\draw [shift={(590.5,266)}, rotate = 269.6] [fill={rgb, 255:red, 0; green, 0; blue, 0 }  ][line width=0.08]  [draw opacity=0] (14.29,-6.86) -- (0,0) -- (14.29,6.86) -- cycle    ;

\draw (318.16,51.27) node  [font=\normalsize,xscale=0.8,yscale=0.8]  {$ \begin{array}{l}
\mathrm{relativistic\ }\\
\mathrm{wave\ Landau}
\end{array}$};
\draw (72.13,130.49) node  [rotate=-359.8,xscale=0.8,yscale=0.8]  {$ \begin{array}{l}
\mathrm{relativistic\ }\\
\mathrm{quantum}\\
\mathrm{Landau}
\end{array}$};
\draw (379.24,126.88) node  [rotate=-359.8,xscale=0.8,yscale=0.8]  {$ \begin{array}{l}
\mathrm{relativistic\ }\\
\mathrm{Landau}
\end{array}$};
\draw (310.24,249.94) node  [rotate=-359.8,xscale=0.8,yscale=0.8]  {$\mathrm{Wave\ Landau}$};
\draw (72.38,328.21) node  [rotate=-359.8,xscale=0.8,yscale=0.8]  {$ \begin{array}{l}
\mathrm{quantum}\\
\mathrm{Landau}
\end{array}$};
\draw (371.06,328.84) node  [rotate=-359.8,xscale=0.8,yscale=0.8]  {$\mathrm{Landau}$};
\draw (160.69,88.35) node [anchor=north west][inner sep=0.75pt]  [rotate=-335.76,xscale=0.8,yscale=0.8] [align=left] {\textcolor[rgb]{0.96,0.65,0.14}{ kinetic limit }};
\draw (183.22,110.97) node [anchor=north west][inner sep=0.75pt]  [xscale=0.8,yscale=0.8] [align=left] {\textcolor[rgb]{0.29,0.56,0.89}{semiclassical limit }};
\draw (52.64,283.03) node [anchor=north west][inner sep=0.75pt]  [rotate=-270,xscale=0.8,yscale=0.8] [align=left] {\textcolor[rgb]{0.72,0.91,0.53}{Newtonian limit }};
\draw (354.92,264.52) node [anchor=north west][inner sep=0.75pt]  [rotate=-270,xscale=0.8,yscale=0.8] [align=left] {\textcolor[rgb]{0.72,0.91,0.53}{Newtonian limit }};
\draw (295.14,187.55) node [anchor=north west][inner sep=0.75pt]  [rotate=-270,xscale=0.8,yscale=0.8] [align=left] {\textcolor[rgb]{0.72,0.91,0.53}{Newtonian \ limit }};
\draw (108.06,109.06) node [anchor=north west][inner sep=0.75pt]  [xscale=0.8,yscale=0.8] [align=left] {(Bose)};
\draw (107.92,132.43) node [anchor=north west][inner sep=0.75pt]  [xscale=0.8,yscale=0.8] [align=left] {(Fermi)};
\draw (171.71,287.74) node [anchor=north west][inner sep=0.75pt]  [rotate=-335.37,xscale=0.8,yscale=0.8] [align=left] {\textcolor[rgb]{0.96,0.65,0.14}{kinetic limit}};
\draw (183.39,316.72) node [anchor=north west][inner sep=0.75pt]  [xscale=0.8,yscale=0.8] [align=left] {\textcolor[rgb]{0.29,0.56,0.89}{semiclassical limit }};
\draw (109.92,312.86) node [anchor=north west][inner sep=0.75pt]  [xscale=0.8,yscale=0.8] [align=left] {(Bose)};
\draw (109.79,336.23) node [anchor=north west][inner sep=0.75pt]  [xscale=0.8,yscale=0.8] [align=left] {(Fermi)};
\draw (603.66,83.28) node  [rotate=-359.8,xscale=0.8,yscale=0.8]  {$ \begin{array}{l}
\mathrm{relativistic}\\
\mathrm{linear\ }\\
\mathrm{Landau}
\end{array}$};
\draw (591.66,299.28) node  [rotate=-359.8,xscale=0.8,yscale=0.8]  {$ \begin{array}{l}
\mathrm{linear}\\
\mathrm{Landau}
\end{array}$};
\draw (433,62) node [anchor=north west][inner sep=0.75pt]  [xscale=0.8,yscale=0.8] [align=left] {\textcolor[rgb]{0.74,0.06,0.88}{linear limit}};
\draw (448,278) node [anchor=north west][inner sep=0.75pt]  [xscale=0.8,yscale=0.8] [align=left] {\textcolor[rgb]{0.74,0.06,0.88}{linear limit}};
\draw (569.99,237.18) node [anchor=north west][inner sep=0.75pt]  [rotate=-270,xscale=0.8,yscale=0.8] [align=left] {\textcolor[rgb]{0.72,0.91,0.53}{Newtonian limit }};

\end{tikzpicture}

 \caption{Landau-type equations}
  \label{fig:Landau}
\end{figure}

\section{\texorpdfstring{$2$}{TEXT}-body interaction Boltzmann type of equations}\label{sec:Boltzmann}
In this section, we present in detail the Boltzmann-type equations shown in Figure \ref{fig:Boltzmann} for the case of two-body interactions ($n=2$). Using the notations $p,\,p_*,\,p'$ and $p_*'$ for the incoming and outgoing momenta, the general equation \eqref{intro:eq-uni} in this case becomes
\begin{equation*}
\label{eq:2-body}
\d_t f+\nabla_p e(p)\cdot \nabla_q f=Q(f),
\end{equation*}
where 
\begin{equation}
\label{eq:2-body operator}
Q(f)=\frac{1}{2}\int_{\R^{3d}}\delta\cB(p,p_*,p',p_*')(q_0(f)+q_1(f))\,dp_*dp'd p_*',  
\end{equation}
where $\delta:=\delta_0^d(e(p)+e(p_*)-e(p')-e(p_*'))\delta_0^1(p+p_*-p'-p_*') $ and
\begin{align*}
q_0(f)&=(a_0+\alpha_0 f(q,p'))(a_1+\alpha_1 f(q,p_*'))(\overline{a}_0+\overline{\alpha}_0 f(q,p))(\overline{a}_1+\overline{\alpha}_1 f(q,p_*))
\\&\qquad-(\overline{a}_0+\overline{\alpha}_0 f(q,p'))(\overline{a}_1+\overline{\alpha}_1 f(q,p_*'))(a_0+\alpha_0 f(q,p))(a_1+\alpha_1 f(q,p_*),  
\end{align*}
and 
\begin{align*}
q_1(f)&=(a_0+\alpha_0 f(q,p_*'))(a_1+\alpha_1 f(q,p'))(\overline{a}_0+\overline{\alpha}_0 f(q,p_*))(\overline{a}_1+\overline{\alpha}_1 f(q,p))
\\&\qquad-(\overline{a}_0+\overline{\alpha}_0 f(q,p_*'))(\overline{a}_1+\overline{\alpha}_1 f(q,p'))(a_0+\alpha_0 f(q,p_*))(a_1+\alpha_1 f(q,p)    
\end{align*}
In particular, the collision operators for the (quantum) Boltzmann equation, the four-wave kinetic equation, and the linear Boltzmann equation (up to a multiplicity constant) are, respectively, given  by 
\begin{align}
Q^{\aB}_{\alpha}(f)&=\int_{\R^{3d}}\delta\cB \big(f'f_*'(1+\alpha f)(1+\alpha f_*)-ff_*(1+\alpha f')(1+\alpha f_*')\big)\dd p_*\dd p'\dd p_*', \label{Q:phonon1}  \\
    Q^{\aB}_{wave}(f)&=\int_{\R^{3d}}\delta\cB\big(f'f_*'f+f'f_*'f'-ff_*f'-ff_*f_*'\big)\dd p_*\dd p'\dd p_*',\label{Q:wave1}\\
Q^{\aB}_{linear}(f)&=\int_{\R^{3d}}\delta\cB
(f'+f_*'-f-f_*)\dd p_*\dd p'\dd p_*'. \label{Q:linear1} 
\end{align}

We recall that the classical and relativistic kinetic energy are respectively given by
\begin{align}
\label{def:e,e-c}
    e(p)=\frac{|p|^2}{2m}\quad\text{and}\quad e(p)=c\sqrt{(mc)^2+|p|^2}.
\end{align}
It is known that the post-collisional momenta $p'$ and $p_*'$ can be parametrised so that the following momentum and energy conservation laws hold
\begin{align}
\label{sec4:cl}
    p+p_*=p'+p_*'\quad \text{and}\quad e+e_*=e'+e_*'.
\end{align}
With such a parametrisation, one can rigorously make sense of the Dirac measure in the definition of the collision operator $Q$ in \eqref{eq:2-body operator}, formulating it as an integral of the form $\int_{\R^d\times S^{d-1}}\dots \dd p_* \dd \omega$, where $\omega$ is the parametrisation parameter. This parametrisation also enables us to rigorously define the discrete gradients $\on$ (in the classical setting) and $\onc$ (in the relativistic setting), see \eqref{def:on} and  \eqref{def:on-c} below. These operators take into account the conservation laws \eqref{sec4:cl}. As a consequence, we also rigorously define the GENERIC building blocks associated with $\on$ and $\onc$ in the classical and relativistic cases.

In the rest of this section, we will present the parameterisation of equations and the $\on$-GENERIC building blocks in the classical and relativistic cases in Section \ref{sub-sec:Boltzmann} and Section \ref{sub-sec:Boltzmann}, respectively.

 The GENERIC structures of the classical Boltzmann, wave kinetic, and linear Boltzmann equations are already known, see for example \cite{Ottinger2018,EH25}.  The new contribution of this work is the derivation of the GENERIC building blocks for relativistic Boltzmann-type equations, presented in Section \ref{sub-sec:Boltzmann-c}.

\subsection{Non-relativistic settings}\label{sub-sec:Boltzmann}
In the classical case, we have the following parametrisation such that the momentum and energy conservation laws \eqref{sec4:cl} holds, see for instance \cite{villani1998new}
\begin{align}
\label{post-p}
 p'=\frac{p+p_*}{2}+\frac{|p-p_*|}{2}\omega,\quad p'_*=\frac{p+p_*}{2}-\frac{|p-p_*|}{2}\omega,\quad \omega\in S^{d-1}.    
\end{align}

We combine the definition of the free discrete gradient $\ong$ defined in \eqref{def:grad-free} with the above parametrisation enforcing the conservation laws to redefine the classical discrete gradient associated with $\omega$ as follows
\begin{align}
\label{def:on}
    \overline\nabla \phi=\phi'+\phi_*'-\phi-\phi_*,
\end{align}
where we write
\begin{align*}
    \phi'=\phi(q,p'),\quad \phi'_*=\phi(q,p'_*),\quad
 \phi_*=\phi(q,p_*).
 \end{align*}
For any  $\phi=\phi(q,p)$ and $G=G(q,p,p_*,p',p_*')$, the following integration by parts formula holds  
\begin{align*}
\int_{\G\times S^{d-1}}G\on\phi\dd\omega\dd p_*\dd p \dd q=-\int_{\Do}(\on\cdot G) \phi\dd p\dd q,
\end{align*}
where the divergence operator, $\on\cdot G$, is given by
\begin{align*}
\on\cdot G(q,p)=&\int_{\R^d\times S^{d-1}}G(q,p,p_*,p',p_*')+G(q,p_*,p,p_*',p')\\
&-G(q,p',p_*',p,p_*)-G(q,p_*',p',p_*,p)\dd\omega\dd p_*.
\end{align*}

We have the following lemma to evaluate the Dirac measure $\delta=\delta^0(p_0+p_{0*}-e'-e_*')\delta^d(p+p_*-p'-p_*')$ in \eqref{eq:2-body operator}.
\begin{lemma}\label{lem:cor}
Let $\cB=\cB(p,p_*,p',p_*')\ge0$ be the collision kernel in \eqref{eq:2-body operator}.
For any $G=G(q,p,p_*,w,w_*)$, we have 
\begin{align}
\label{int-delta}
    \int_{\Do} \delta \cB G(q,p,p_*,w,w_*)\dd w\dd w_*=\int_{S^{d-1}}BG(q,p,p_*,p',p_*')\dd\omega,
\end{align}
where $p'$ and $p_*'$ are given by \eqref{post-p}, and $B=B(p,p_*,\omega) \ge 0$ is the modified collision kernel such that 
\begin{equation}
\label{modified B}    
B=\frac{|p-p_*|^{d-2}}{2^d}\cB.
\end{equation}
\end{lemma}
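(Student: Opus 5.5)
Throughout, $e(p)=|p|^2/2m$ is the classical energy from \eqref{def:e,e-c}. The constant in \eqref{modified B} suggests the intended normalisation of the energy delta is $\delta^1_0\big(\tfrac12(|p|^2+|p_*|^2-|w|^2-|w_*|^2)\big)$, i.e.\ effectively $m=1$; general $m$ only rescales the constant by a power of $m$. The plan is a direct change of variables that resolves the two Dirac measures one after the other.

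First I would integrate out $w_*$ against $\delta^d_0(p+p_*-w-w_*)$, which sets $w_*=p+p_*-w$. I then introduce centre-of-mass and relative variables, $P=\frac{p+p_*}{2}$, $u=p-p_*$ for the incoming pair and $w=P+\frac{z}{2}$, $w_*=P-\frac{z}{2}$ for the outgoing pair. The Jacobian is $\dd w=2^{-d}\dd z$.

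Next I rewrite the energy constraint using the parallelogram identity:
\begin{align*}
|p|^2+|p_*|^2=2|P|^2+\tfrac12|u|^2,\qquad |w|^2+|w_*|^2=2|P|^2+\tfrac12|z|^2 .
\end{align*}
Hence the energy delta becomes $\delta^1_0\big(\tfrac14(|u|^2-|z|^2)\big)=4\,\delta^1_0(|u|^2-|z|^2)$, by homogeneity of degree $-1$ of the one-dimensional delta.

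I then pass to polar coordinates $z=r\omega$ with $\dd z=r^{d-1}\dd r\,\dd\omega$. Writing $\delta^1_0(|u|^2-r^2)=\frac{1}{2|u|}\delta^1_0(r-|u|)$ for $r>0$ and integrating in $r$ gives $r=|u|=|p-p_*|$. With this value of $r$, $w$ and $w_*$ are exactly $p'$ and $p_*'$ from \eqref{post-p}.

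Collecting the factors gives
\begin{align*}
2^{-d}\cdot 4\cdot\frac{|u|^{d-1}}{2|u|}=2^{1-d}|p-p_*|^{d-2}.
\end{align*}
This agrees with \eqref{modified B} up to a factor of $2$. The main obstacle is fixing this constant, which depends on the convention for the energy delta: using $\delta^1_0(|p|^2+|p_*|^2-|w|^2-|w_*|^2)$ without the $\tfrac12$, or placing the factor $\tfrac12$ in front of $Q$ as in \eqref{eq:2-body operator}, changes the power of $2$. I would therefore carefully fix which convention the paper uses so that exactly $|p-p_*|^{d-2}/2^d$ comes out, and absorb any leftover constant (including any $m$-dependence) into $\cB$.

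Formally, every step is a justified change of variables for nonnegative measurable $G$, or for $G$ continuous with enough decay. The degenerate set $p=p_*$ has measure zero and does not affect the identity.
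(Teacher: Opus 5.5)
Your argument is correct and is essentially the paper's proof. Like the paper, you resolve the momentum delta, write the outgoing momenta around $\frac{p+p_*}{2}$ (your $z/2$ is the paper's $v$), pass to polar coordinates, and resolve the energy delta with $\delta^1_0(F(r))=|F'(r_0)|^{-1}\delta^1_0(r-r_0)$.

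The constant is not a real obstacle, but your opening guess about the normalisation is the wrong way round. The paper's proof uses the unnormalised $\delta^1_0(|p|^2+|p_*|^2-|w|^2-|w_*|^2)$. With that convention your factor $4$ becomes $2$, and you get exactly $2^{-d}|p-p_*|^{d-2}$. The $\tfrac12$-normalised delta gives $2^{1-d}$, as you found.

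The factor $\tfrac12$ in front of $Q$ plays no role in this identity. You also cannot absorb leftover constants into $\cB$, since the lemma fixes $B$ in terms of $\cB$.
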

\begin{proof}
Since $q,p,p_*$ will be fixed in this lemma, for the simplicity of notations, we write $G(q,p,p_*,w,w_*)=G(w,w_*)$.
By straightforward calculations, we have 
\begin{align}
    &\int_{\Do} \delta^d(p+p_*-w-w_*)\delta^1(|p|^2+|p_*|^2-|w|^2-|w_*|^2)G(w,w_*)\dd w\dd w_*\notag\\
    =&{}\int_{\R^d} \delta^1(|p|^2+|p_*|^2-|w|^2-|p+p_*-w|^2)G(w,p+p_*-w)\dd w.\label{eq:temp1}
\end{align}
Let $w=\frac{p+p_*}{2}+v=:\frac{p+p_*}{2}+|v|\omega$. Then we have $\dd w=|v|^{d-1}\dd |v|\dd \omega$ for $|v|\in\R_+$ and $\omega\in S^{d-1}$. Substituting these expressions into \eqref{eq:temp1} we get
\begin{align*}
    &\int_{\R^d} \delta^1(|p|^2+|p_*|^2-|w|^2-|p+p_*-w|^2)G(w,p+p_*-w)\dd w\\
    =&{}\int_{\R_+}\int_{S^{d-1}} \delta^1\big(\frac{|p-p_*|^2}{2}-2|v|^2\big)G\big(\frac{p+p_*}{2}+|v|\omega,\frac{p+p_*}{2}-|v|\omega\big)|v|^{d-1}\dd |v|\dd \omega.
\end{align*}
By using the identity 
\[
\delta^1\big(\frac{|p-p_*|^2}{2}-2|v|^2\big)=\frac{1}{2|p-p_*|}\delta^1(\frac{|p-p_*|}{2}-|v|),
\]
we have 
\begin{align*}
&\int_{S^{d-1}} \int_{\R_+}\delta^1\big(\frac{|p-p_*|^2}{2}-2|v|^2\big)G\big(\frac{p+p_*}{2}+|v|\omega,\frac{p+p_*}{2}-|v|\omega\big)|v|^{d-1}\dd |v|\dd \omega
\\&\quad=\int_{S^{d-1}}G\big(\frac{p+p_*}{2}+\frac{|p-p_*|}{2}\omega,\frac{p+p_*}{2}-\frac{|p-p_*|}{2}\omega\big)\Big(\frac{|p-p_*|}{2}\Big)^{d-1}\frac{1}{2|p-p_*|}\dd \omega
\\&\quad=\frac{|p-p_*|^{d-2}}{2^d}\int_{S^{d-1}} G(p',p_*')\dd \omega.
\end{align*}
The claimed identity \eqref{int-delta} is then followed by incorporating the kernel $\cB$, which will be transformed to the modified kernel $B$ given in \eqref{modified B} according to the above calculations.
\end{proof}
Applying Lemma \ref{lem:cor} to the collision operators \eqref{Q:phonon1}, \eqref{Q:wave1}, and \eqref{Q:linear1}, we obtain the following parametrised two-body interaction Boltzmann-type of equations
\begin{equation}
    \label{uni:Boltzmann}
\d_t f+\frac{p}{m}\cdot \nabla_q f=Q^{\aB}(f),
\end{equation}
where the (quantum) Boltzmann, four-wave kinetic, and linear Boltzmann collision operators are given by,  respectively 
\begin{align}
Q^{\aB}_{\alpha}(f)&=\int_{\R^d\times S^{d-1}}B\big(f'f_*'(1+\alpha f)(1+\alpha f_*)-ff_*(1+\alpha f')(1+\alpha f_*')\big)\dd\omega \dd p_*, \label{Q:phonon}  \\
    Q^{\aB}_{wave}(f)&=\int_{\R^d\times S^{d-1}}B\big(f'f_*'f+f'f_*'f'-ff_*f'-ff_*f_*'\big)\dd\omega \dd p_*,\label{Q:wave}\\
Q^{\aB}_{linear}(f)&=\int_{\R^d\times S^{d-1}}B
(f'+f_*'-f-f_*)\dd\omega \dd p_*. \label{Q:linear} 
\end{align}
In the classical case, we take the kernel of the following form 
\begin{equation}
\label{def:B}
    B=B(|p-p_*|,\omega)=\sigma(|p-p_*|)b(\theta)\ge 0,
\end{equation}
where $\sigma,\,b:\R_+\to\R_+$ are smooth functions, and $\theta\in[0,\pi/2]$ denotes the deviation angle
\begin{equation*}
\label{theta-cla}
\theta=\arccos \frac{\langle p-p_*,\omega\rangle}{|p-p_*|}.
\end{equation*}
Notice that one can restrict $\theta\in[0,\pi/2]$ by symmetrising 
$$B(|p-p_*|,\omega)=\frac{B(|p-p_*|,\omega)+B(|p-p_*|,-\omega)}{2}\mathbb{1}_{\theta\in[0,\pi/2]}.$$
The equation \eqref{uni:Boltzmann} can be written in the form of \eqref{uni-eq}
\begin{equation*}
    \d_t f+\frac{p}{m}\cdot\nabla_q f=-\frac14\d\aR^*_{\aB}\big(f,h'(f)\big),
\end{equation*}
where the dissipation potential $\aR^*_{\aB}$ is given by 
\begin{equation*}
\label{def:aR}
\aR^*_{\aB}(f,\av)=\int_{\R^d\times S^{d-1}}\Psi^*(\on \av)\Theta(f) B   \dd\omega \dd p_*.
\end{equation*}
In the case of \eqref{Q:phonon}, \eqref{Q:wave} and \eqref{Q:linear} the quantities $\Psi^*,\Theta(f)$ and $h(f)$ are given as in Table \ref{tab:models} and \eqref{H-all}.
In addition to the GENERIC building block \eqref{EL}-\eqref{MLL} associated to the free gradient $\ong$ given in Section \ref{sub-sec:B-gen}, the  equation \eqref{uni:Boltzmann} also has the following GENERIC building block $\{\aL, \d\aR^{*}_{\aB}, \aaE, \aS\}$  associated to $\on$, where $\aL,\aaE, \aS$  are given as in \eqref{EL} and \eqref{MLL}. More precisely, $\aaE$ and $\d\aR^{*}_{\aB}$ are given by
\begin{equation*}
    \label{block:Boltzmann}
\aaE(f)=\int_{\Do} \frac{|p|^2}{2m} f\quad\text{and}\quad\d \aR^*_{\aB}(f,\xi)=-\frac14\on\cdot\big(B \Theta(f) (\Psi^*)'(\on \xi)\big).
\end{equation*}




\subsection{Relativistic settings}
\label{sub-sec:Boltzmann-c}

 Let $m>0$ denote particle's mass at rest, and $c$  denote the speed of light. 
The energy of a relativistic particle with momentum $p$ is given by
\begin{align*}
e(p)=cp_0\quad\text{and}\quad p_0\defeq\sqrt{(mc)^2+|p|^2}.
\end{align*}

To distinguish them from the post-collisional momenta in the classical case \eqref{post-p}, we denote the relativistic post-collisional momenta by $\hat p'$ and $\hat p_*'$. We have the following parametrisation such that the momentum and energy conservation laws \eqref{sec4:cl} holds, see for instance \cite{HJ24,Str11}
\begin{equation}
\label{post-p-c}
    \begin{aligned}
     \hat p'&=\frac{p+p_*}{2}+\frac{g}{2}\Big(I_d+(\rho-1)\frac{(p+p_*)\otimes(p+p_*)}{|p+p_*|^2}\Big)\omega,\\
      \hat p_*'&=\frac{p+p_*}{2}-\frac{g}{2}\Big(I_d+(\rho-1)\frac{(p+p_*)\otimes(p+p_*)}{|p+p_*|^2}\Big)\omega
    \end{aligned}
\end{equation}
for some $\omega\in S^{d-1}$. For the sake of completeness, we verify the above parametrisation indeed satisfies the conservation laws in Lemma \ref{app-prop:post-p}.

We define the energy-momentum $(d+1)$-vector
    \begin{align*}
    p^\mu=(p_0,p)^T \quad\text{and}\quad p_\mu=(p_0,-p)^T \in \R^{d+1},  
    \end{align*}
    which satisfies the so-called on-shell condition
    \begin{equation*}
p^\mu \cdot p_\mu=p_0^2-|p|^2=(mc)^2.
    \end{equation*}
Let $g$ and $s$ denote the momentum and energy in the centre-of-mass framework given by
\begin{equation}
\label{s-g}    
\begin{gathered}
s=(p^\mu+p^\mu_*)\cdot  (p_\mu+(p_*)_\mu)=(p_0+p_{0*})^2-|p+p_*|^2,\\
g=\sqrt{-(p^\mu-p^\mu_*)\cdot  (p_\mu-(p_*)_\mu)}=\sqrt{-(p_0-p_{0*})^2+|p-p_*|^2}.
    \end{gathered}
    \end{equation}
    Notice that
    \begin{align*}
        s=4(mc)^2+g^2.
    \end{align*}
For the reason of completeness, we summarise the details of the centre-of-mass framework and Lorentz transformation in Appendix \ref{app:Lorentz}.

Similar to \eqref{def:on} in the non-relativistic setting, for $\phi=\phi(q,p)$, we define the Boltzmann relativistic discrete gradient $\onc$  by 
\begin{align}
\label{def:on-c}
\onc \phi=\hat\phi'+\hat \phi'_*-\phi-\phi_*,   
\end{align}
where we write
\begin{align*}
   \hat \phi'=\phi(q,\hat p')\quad\text{and}\quad \hat \phi_*'=\phi(q,\hat p_*').
\end{align*}
For any  $\phi=\phi(q,p)$ and $G=G(q,p,p_*,\hat p',\hat p_*')$, the following integration by parts formula holds  
\begin{align*}
\int_{\G\times S^{d-1}}G\onc\phi\dd\omega\dd p_*\dd p\dd q=-\int_{\Do}(\onc\cdot G) \phi\dd p\dd q,
\end{align*}
where the discrete divergence operator, $\onc\cdot G$, is given by
\begin{align*}
\onc\cdot G(q,p)=&\int_{\R^d\times S^{d-1}}G(q,p,p_*,\hat p',\hat p_*')+G(q,p_*,p,\hat p_*',\hat p')\\
&-\frac{\hat p_0'{\hat p_{0*}'}}{p_0p_{0*}}\big(G(q,\hat p',\hat p_*',p,p_*)+G(q,\hat p_*',\hat p',p_*,p)\big)\dd \omega\dd p_*.
\end{align*}
We note that $\frac{\hat p_0' {\hat p_{0*}'}}{p_0p_{0*}}$ is the Jacobian of the transformation $(p',p_*')\mapsto (p,p_*)$.

In \cite{Str11}, the following lemma to evaluate the momentum-energy Dirac measure $\delta$ in \eqref{eq:2-body operator} has been shown.
\begin{lemma}[\cite{Str11}, Theorem 2]\label{lem:cor-c}
Let $\cB^c=\cB^c(p,p_*,\hat p',\hat p_*')\ge0$ be the collision kernel in \eqref{eq:2-body operator}.
For any $G=G(q,p,p_*,w,w_*)$, we have 
\begin{align*}
    \int_{\Do} \delta \cB^cG(q,p,p_*,w,w_*)\frac{\dd w}{w_0}\frac{\dd w_*}{w_{0*}}=\int_{S^{d-1}}B^c G(q,p,p_*,\hat p',\hat p_*')\dd\omega,
\end{align*}
where $\hat p'$ and $\hat p_*'$ are given by \eqref{post-p-c}, and $B^c=B^c(p,p_*,\omega) \ge 0$ is the modified collision kernel such that 
\begin{align*}
\cB^c=\frac{2^{d-2}\sqrt s g^{2-d}}{p_0'p_{0*}'}B^c.
\end{align*}
\end{lemma}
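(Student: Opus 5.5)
We reduce the left-hand side to an integral over the sphere of relative directions in the centre-of-mass frame, mirroring the proof of Lemma~\ref{lem:cor}. Throughout, $q,p,p_*$ are fixed and we write $G(w,w_*)$. Here $\delta$ is the energy--momentum delta $\delta^1_0(c(p_0+p_{0*}-w_0-w_{0*}))\,\delta^d_0(p+p_*-w-w_*)$, and the measure $\frac{dw}{w_0}\frac{dw_*}{w_{0*}}$ is the Lorentz-invariant one. Since $\cB^c$ is Lorentz invariant and $\delta$ transforms covariantly (the factor $c$ only rescales, and is absorbed into the normalisation), the whole left-hand side is a Lorentz scalar as a functional of $G$.

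\emph{Step 1: move to the centre-of-mass frame.} We apply the Lorentz boost $\Lambda$ of Appendix~\ref{app:Lorentz} that sends the total four-momentum $p^\mu+p_*^\mu$ to $(\sqrt s,0)$. In that frame the momentum delta forces $\tilde w_*=-\tilde w$. The energy delta becomes $\delta^1_0\big(\sqrt s-2\sqrt{(mc)^2+|\tilde w|^2}\big)$, so $|\tilde w|=g/2$ by $s=4(mc)^2+g^2$.

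\emph{Step 2: evaluate the one-dimensional delta.} We integrate out $\tilde w_*$ with the momentum delta and pass to polar coordinates $\tilde w=r\omega$, with $d\tilde w=r^{d-1}dr\,d\omega$. The radial delta contributes $\big|\frac{d}{dr}2\sqrt{(mc)^2+r^2}\big|^{-1}=\frac{\tilde w_0}{2r}$. Combined with the two invariant-measure weights $\frac{1}{\tilde w_0^{2}}$, where $\tilde w_0=\sqrt s/2$, this produces
\[
\Big(\frac g2\Big)^{d-1}\frac{1}{2(g/2)}\cdot\frac{2}{\sqrt s}
=\frac{g^{d-2}}{2^{d-2}\sqrt s}.
\]
Up to the harmless constant coming from the $c$ in the energy delta, this is exactly the prefactor that converts $\cB^c$ into $B^c$ via $\cB^c=\frac{2^{d-2}\sqrt s\,g^{2-d}}{p_0'p_{0*}'}B^c$. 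The denominator $p_0'p_{0*}'$ cancels the invariant-measure weights once everything is written back in the lab frame.

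\emph{Step 3: return to the lab frame.} We boost back with $\Lambda^{-1}$. The centre-of-mass points $\pm\frac g2\omega$ must map precisely to $\hat p'$ and $\hat p_*'$ of \eqref{post-p-c}. This is the content of Lemma~\ref{app-prop:post-p}, together with the explicit spatial part of the boost: it acts as $I_d+(\rho-1)\frac{(p+p_*)\otimes(p+p_*)}{|p+p_*|^2}$ on the direction $\omega$, plus a translation by $\frac{p+p_*}{2}$ after using energy conservation. The direction $\omega$ is reparametrised in the lab frame, and we absorb the resulting Jacobian and the frame-dependent weights into $B^c$.

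The main obstacle is the bookkeeping in Step~3: verifying that the composite map $\omega\mapsto\hat p'$ equals the boost image, including the value of $\rho$ (expected to be $\frac{p_0+p_{0*}}{\sqrt s}$). Equivalently, one can avoid the boost entirely. In that route we substitute $w_*=p+p_*-w$ and solve the energy constraint in the lab frame on the ellipsoid parametrised by \eqref{post-p-c}, computing the Jacobian directly; the definition of $B^c$ shows this is the intended normalisation. I would try the boost argument first and fall back on this direct lab-frame computation if the constant does not match.
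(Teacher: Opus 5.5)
\textbf{There is a genuine gap in how you match the constants.} Your Step~2 is correct, and together with Proposition~\ref{app-prop:post-p} it already completes the reduction. For fixed $p,p_*$ the boost $\Lambda=\Lambda(p,p_*)$ is a fixed change of variables that preserves $\frac{\dd w}{w_0}$ and the $(d+1)$-dimensional delta ($\det\Lambda=1$). The points $\pm\frac g2\omega$ are sent back \emph{exactly} to $\hat p',\hat p_*'$ of \eqref{post-p-c}, with the same $\omega$. So there is no reparametrisation Jacobian, and you do not need Lorentz invariance of $\cB^c$ ($\cB^c$ and $G$ are simply evaluated at the preimages). Carried out honestly, your argument gives
\[
\int_{\Do}\delta\,\cB^c G\,\frac{\dd w}{w_0}\frac{\dd w_*}{w_{0*}}=\frac{1}{c}\,\frac{g^{d-2}}{2^{d-2}\sqrt{s}}\int_{S^{d-1}}\cB^c(p,p_*,\hat p',\hat p_*')\,G(q,p,p_*,\hat p',\hat p_*')\dd\omega .
\]
The gap is your assertion that the denominator $p_0'p_{0*}'$ ``cancels the invariant-measure weights once everything is written back in the lab frame''. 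Those weights were already used in Step~2: they are the factor $\tilde w_0^{-2}=4/s$. The invariance of $\frac{\dd w}{w_0}$ says precisely that returning to the lab frame creates no new factors $(\hat p_0'\hat p_{0*}')^{-1}$. Inserting $\cB^c=\frac{2^{d-2}\sqrt s\,g^{2-d}}{\hat p_0'\hat p_{0*}'}B^c$ into the display therefore yields $\frac1c\int_{S^{d-1}}\frac{B^c}{\hat p_0'\hat p_{0*}'}G\dd\omega$, not $\int_{S^{d-1}}B^cG\dd\omega$. You can check this without any boost. For $p_*=-p$ one has $\Lambda=I$ and $\hat p_0'=\hat p_{0*}'=p_0$, and a one-line polar computation shows the two sides differ by the factor $c\,p_0^2$. ``Absorbing the Jacobian and frame-dependent weights into $B^c$'' is not available, because $B^c$ is fixed by the stated relation. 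Your fall-back lab-frame computation evaluates the same integral, so it cannot change the constant either.

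\textbf{The statement is inconsistent as written.} No argument can establish it: the factor $\hat p_0'\hat p_{0*}'$ is counted twice. Your computation does prove either of the two consistent versions.
\begin{enumerate}
\item With the invariant measure on the left, the correct relation is $\cB^c=c\,2^{d-2}\sqrt s\,g^{2-d}B^c$. For $d=3$ this is the centre-of-momentum normalisation that produces the M\o ller velocity $\frac c4\frac{g\sqrt s}{p_0p_{0*}}$ in \cite{Str11}.
\item With Lebesgue measure $\dd w\,\dd w_*=w_0w_{0*}\frac{\dd w}{w_0}\frac{\dd w_*}{w_{0*}}$ on the left, as in \eqref{eq:2-body operator}, the extra $\hat p_0'\hat p_{0*}'$ cancels the denominator. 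The stated relation then holds up to the factor $c$ coming from $e=cp_0$.
\end{enumerate}
A complete proof has to choose one of these versions explicitly, rather than assert that the constants agree. Note also that the step you single out as the main obstacle, identifying the boost image with \eqref{post-p-c} with $\rho=\frac{p_0+p_{0*}}{\sqrt s}$, is already Proposition~\ref{app-prop:post-p}. The real issue is the normalisation.
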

Notice that the kernel $\cB^c$ satisfies the symmetrical condition \eqref{def:cB}, that is
\[
\cB^c(p,p_*,p',p_*')=\cB^c(p',p_*',p,p_*).
\]
However, this is not true for the kernel $B^c$. We note that $ \frac{\dd p\dd p_*}{p_0p_{0*}} =  \frac{\dd \hat p'\dd \hat p_*'}{\hat p_0'\hat p_{0*}'}$ is a Lorentz invariant measure.

The parametrised relativistic Boltzmann type of equations can be written as
\begin{equation}
    \label{uni:Boltzmann-c}
   \d_t f+\frac{cp}{p_0}\cdot \nabla_q f=Q^{\aB,c}(f).
\end{equation}
The relativistic (quantum) Boltzmann, four-wave kinetic, and linear Boltzmann collision operators have the form of \eqref{Q:phonon}, \eqref{Q:wave} and \eqref{Q:linear} associated with a relativistic kernel $B^c$. 
Let $\sigma^c,\,b:\R_+\to\R_+$.  The relativistic collision kernel $B^c$ is given by
\begin{equation}
\label{def:B-c}
B^c=v_c\sigma^c(g)b(\hat \theta),\quad v_c:=\frac{cg\sqrt s}{p_0p_{0*}},
\end{equation}
where $v_c$ is the so-called M\o ller velocity.
In the above, $\hat \theta\in[0,\pi/2]$ denotes the scattering angle
\begin{equation}
\label{theta-c}
\hat \theta=\arccos \frac{(p^\mu-p^\mu_*)\cdot(p'_\mu-(p_\mu)_*')}{g^2}\in[0,\pi/2].
\end{equation}
The equation \eqref{uni:Boltzmann-c} can be written in the form of \eqref{uni-eq}
\begin{equation*}
    \d_t f+\frac{cp}{p_0}\cdot\nabla_q f=-\frac14\d\aR^{c*}_{\aB}\big(f,h'(f)\big),
\end{equation*}
where the relativistic dissipation potential $\aR^{c*}_{\aB}$ is given by 
\begin{equation*}
\label{def:aR-c}
\aR^{c*}_{\aB}(f,\av)=\int_{\R^d\times S^{d-1}}B^c\Theta(f) \Psi^*(\onc \av)   \dd\omega \dd p_*.
\end{equation*}
In the case of relativistic (quantum) Boltzmann, wave kinetic and linear Boltzmann equations, the quantities $\Psi^*,\Theta(f)$ and $h(f)$ are given as in Table \ref{tab:models} and \eqref{H-all}.

In addition to the GENERIC building block \eqref{EL}-\eqref{MLL} associated to the free gradient $\ong$ given in Section \ref{sub-sec:B-gen}, the  equation \eqref{uni:Boltzmann-c} also has the following GENERIC building block $\{\aL, \d\aR^{c*}, \aaE, \aS\}$ associated to $\onc$, where $\aL,\aaE, \aS$  are given as in \eqref{EL} and \eqref{MLL}. More precisely, $\aaE$ and $\d\aR^{c*}$ are given by
\begin{equation*}
    \label{block:Boltzmann-c}
\aaE^c(f)=\int_{\Do} cp_0 f\quad\text{and}\quad \d \aR^{c*}(f,\xi)=-\frac14\onc\cdot\big(B^c \Theta(f) (\Psi^*)'(\onc \xi)\big).
\end{equation*}
\begin{remark}
 \label{sec:limit-n=2}
In view of the parametrisation presented in the previous subsections, we can formally derive the Newtonian limit from the relativistic Boltzmann equation to the classical one. In fact, from \eqref{s-g}, \eqref{def:B-c}, and \eqref{theta-c}, as  $c\to+\infty$, we have  \begin{align*}
\hat\theta\to\theta,\quad cp/p_0\to p/m,\quad g\to |p-p_*|\quad \text{and}\quad v_c\to \frac{2|p-p_*|}{m}.    
\end{align*} 
In addition, it follows from  \eqref{post-p-c} that the relativistic post-collision momenta converge to the classical ones defined in \eqref{post-p}. Then the relativistic kernels converges to the classical kernels
     \begin{align*}
        B^c=v_c\sigma^c(g)b(\hat \theta)\to B=\sigma(|p-p_*|)b(\theta),
    \end{align*}
with
\begin{align*}
   \sigma (|p-p_*|)= \frac{2|p-p_*|}{m}\sigma^c (|p-p_*|).  
    \end{align*}
We refer the reader to \cite{strain2010global,HJ24} for related papers on the Newtonian limit.
\end{remark}

\section{Small angle limit and Landau type of equations}\label{sec:Landau}

In this section, we perform the small-angle limits of the Boltzmann type equations presented in Section \ref{sec:Boltzmann} to derive the Landau-type equations shown in Figure \ref{fig:Landau}. This is motivated by the celebrated grazing limit from the classical Boltzmann equation to the classical Landau equation, see for instance \cite{villani1998new,carrillo2024landau}. Recently, this has been extended to the spatially homogeneous relativistic Boltzmann  \cite{HJ24}, the quantum Boltzmann equation \cite{GPTW25} and the 4-wave kinetic equations \cite{DH25C}. We consider this limit in a unified manner, in particular, as a consequence, the calculations for the relativistic quantum Boltzmann case is new. 

We consider the classical and relativistic collision kernels given by \eqref{def:B} and \eqref{def:B-c}
\begin{equation*}
    B(p,p_*,\omega)=\sigma(|p-p_*|)b(\theta),\quad  B^c(p,p_*,\omega)=v_c\sigma^c(g)b(\hat \theta).
\end{equation*}

We consider the singular 
angle function $\beta(\theta)\defeq\sin\theta ^{d-2}b(\theta)$ such that $\supp(\beta)\subset[0,\pi/2]$,
\begin{equation}
\label{beta:ass}
    \beta(\theta) = \sin\theta^{d-2}b(\theta) \gtrsim \theta^{-1-\nu}\quad\text{and}\quad \int_0^{\frac{\pi}{2}}\beta(\theta)\theta^2\dd\theta=8(d-1)/|S^{d-2}|
\end{equation}
for some $\nu\in(0,2)$.
The constant on the right-hand side is chosen to normalise $\beta$.
For $d=2$, we take $|S^0|=2$. 

For $\varepsilon\in (0,1)$, we take the following scaling of $\beta$
\begin{equation*}
\label{beta-eps}
\beta^\varepsilon(\theta)={\pi^3}/{\varepsilon^3}\beta\Big(\frac{\pi\theta}{\varepsilon}\Big)\quad\text{and}\quad \beta^\varepsilon(\theta)=\sin\theta ^{d-2}b^\varepsilon(\theta).
\end{equation*}
We define the scaling classical and relativistic collision operator by replacing $b$ by $b^\varepsilon$
\begin{equation}
\label{def:B-varepsilon}
    B_\varepsilon=\sigma(|p-p_*|)b^\varepsilon(\theta)\quad\text{and}\quad  B^c_\varepsilon=v_c\sigma^c(g)b^\varepsilon(\hat \theta).
\end{equation}
In this section, we  will study the small angle limit, i.e. as $\varepsilon\to0$ of the non-relativistic  Boltzmann equation \eqref{uni:Boltzmann} 
\begin{equation}
   \d_t f+\frac{p}{m}\cdot \nabla_q f=Q^{\aB}_\varepsilon(f), \label{grazing-limit}
\end{equation}
and of the relativistic quantum Boltzmann equation \eqref{uni:Boltzmann-c}
\begin{equation*}
  \d_t f+\frac{cp}{p_0}\cdot \nabla_q f=Q^{\aB,c}_\varepsilon(f). \label{grazing-limit-c}    
\end{equation*}
In the above, the rescaled collision operators $Q^{\aB}_\varepsilon(f)$ and $Q^{\aB,c}_\varepsilon(f)$
are obtained respectively from the collision operators $Q^{\aB}(f)$ and $Q^{\aB,c}(f)$ by replacing the kernel $B$ by the corresponding rescaled kernel $B_\varepsilon$ defined in \eqref{def:B-varepsilon}.


Next we will derive the limiting Landau-type systems in both non-relativistic and relativistic settings, then show that they are also GENERIC systems. To this end, we define the classical Landau gradient operator $ \widetilde \nabla$ by
\begin{align*}
   \widetilde \nabla f=\Pi_{(p-p_*)^\perp}(\nabla_p f-\nabla_{p_*}f_*),
\end{align*}
where $\Pi_{(p-p_*)^\perp}$ denotes the orthogonal projection onto $(p-p_*)^\perp$.
Let $G=G(q,p,p_*):\R^{3d}\to \R^d$. Then the following integration by parts formula holds
\begin{align*}
    \int_{\R^{3d}}G\cdot \widetilde \nabla \phi\dd p_*\dd p\dd q=-\int_{\R^{2d}}\widetilde\nabla\cdot G \phi\dd p\dd q,
\end{align*}
where the discrete Landau divergence operator $\widetilde\nabla\cdot G$ is given by 
\begin{align*}
\widetilde\nabla\cdot G(q,p)=\nabla_p\cdot\int_{\R^d}\Pi_{(p-p_*)^\perp}\big(G(q,p,p_*)-G(q,p_*,p)\big)\dd p_*.  \end{align*}
We also define the Landau kinetic kernels by
\begin{equation}
\label{def:sigma-bar}
 \overline{\sigma}(|p-p_*|)=\sigma(|p-p_*|)|p-p_*|^2\quad\text{and}\quad   v_c\overline \sigma^c(g)=\frac{cg\sqrt s}{p_0p_{0*}}\sigma^c(g)g^2.
\end{equation}
\subsection{Small-angle limit of the (quantum) Boltzmann equation in the non-relativistic setting}
\label{sub-sec:Landau}
To show the small-angle limit, we apply the following grazing limit lemma.    
\begin{lemma}
\label{lem:grazing}
  Let $\varepsilon\in(0,1)$. Let $\kappa(f)=a+\alpha f$ and $a,\alpha\in\{-1,0,1\}$. Let $\varepsilon \chi=\pi\theta$.
For any $f,\phi\in \cS(\R^{2d})$, we have, as $\varepsilon\to0$ 
\begin{multline*}
\int_{S^{d-2}_{k^\perp}}\kappa(f')\kappa(f_*')\overline\nabla \phi\\
\rightarrow \frac{\chi^2|S^{d-2}|}{8(d-1)}|p-p_*|^2\Big(2\big((\nabla_p-\nabla_{p_*})\kappa( f)\kappa( f_*)\big)\cdot \Pi_{(p-p_*)^\perp}\big(\nabla_p\phi-\nabla_{p_*}\phi_*\big)\\
\quad+\kappa( f)\kappa( f_*)(\nabla_p-\nabla_{p_*})\cdot\big(\Pi_{(p-p_*)^\perp}(\nabla_p \phi-\nabla_{p_*} \phi_*)\big)\Big),
\end{multline*}
where $S^{d-2}_{k^\perp}=\{p\in S^{d-1}\mid k\cdot p=0\}$. For $d=2$ and $k=(k_1,k_2)\in S^1$, we use the notation $S^{0}_{k^\perp}=\{(k_2,-k_1),\,(-k_2,k_1)\}$, and  $ \int_{S^{0}_{k^\perp}}f=f(k_2,-k_1)+f(-k_2,k_1)$. 
\end{lemma}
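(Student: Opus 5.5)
The plan is a second-order Taylor expansion in the deviation angle $\theta$, followed by averaging over the sphere $S^{d-2}_{k^\perp}$ of directions orthogonal to $k$. Put $u=p-p_*$ and $k=u/|u|$, and write $\omega=\cos\theta\,k+\sin\theta\,\sigma$ with $\sigma\in S^{d-2}_{k^\perp}$. This is the parametrisation in which $\theta$ is the deviation angle of \eqref{def:B}. From \eqref{post-p} we get
\[
p'-p=\tfrac{|u|}{2}\big((\cos\theta-1)k+\sin\theta\,\sigma\big),\qquad p_*'-p_*=-(p'-p).
\]
With $\theta=\varepsilon\chi/\pi$, the displacement $h:=p'-p$ satisfies $h=\tfrac{|u|}{2}\theta\sigma-\tfrac{|u|}{4}\theta^2k+O(\theta^3)$. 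Since $f,\phi\in\cS$, all Taylor remainders are uniformly controlled, locally in $(p,p_*)$.

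\textbf{Step 1: expansion.} Set $F(p,p_*)=\kappa(f(p))\kappa(f(p_*))$, so the integrand is $\kappa(f')\kappa(f_*')=F(p+h,p_*-h)$. Introduce $D:=\nabla_p-\nabla_{p_*}$, so that $\Phi(p+h,p_*-h)$ expands in powers of $h\cdot D$. Then
\[
\kappa(f')\kappa(f_*')=F+h\cdot DF+O(\theta^2),\qquad \on\phi=h\cdot(\nabla_p\phi-\nabla_{p_*}\phi_*)+\tfrac12 h^{\otimes2}:(\nabla_p^2\phi+\nabla_{p_*}^2\phi_*)+O(\theta^3).
\]
The zeroth-order term of $\on\phi$ vanishes, which is why the leading nontrivial contribution is of order $\theta^2$. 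Multiplying out and keeping only terms up to $\theta^2$ gives three contributions:
\begin{itemize}
\item $F\,\tfrac{|u|}{2}\theta\,\sigma\cdot(\nabla_p\phi-\nabla_{p_*}\phi_*)$, at order $\theta$;
\item $-F\,\tfrac{|u|}{4}\theta^2 k\cdot(\nabla_p\phi-\nabla_{p_*}\phi_*)+\tfrac{|u|^2}{8}\theta^2 F\,(\sigma\otimes\sigma):(\nabla_p^2\phi+\nabla_{p_*}^2\phi_*)$, at order $\theta^2$;
\item $\tfrac{|u|^2}{4}\theta^2(\sigma\cdot DF)\big(\sigma\cdot(\nabla_p\phi-\nabla_{p_*}\phi_*)\big)$, the cross term, also at order $\theta^2$.
\end{itemize}

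\textbf{Step 2: sphere integrals.} Integrate over $\sigma\in S^{d-2}_{k^\perp}$ using $\int\sigma=0$ and $\int\sigma\otimes\sigma=\frac{|S^{d-2}|}{d-1}\Pi_{u^\perp}$. For $d=2$ the two-point convention in the statement gives the same identities, with $|S^0|=2$. The order-$\theta$ term then drops out. What remains is $\frac{\theta^2|S^{d-2}|}{8(d-1)}|u|^2$ multiplied by
\[
2\,DF\cdot\Pi_{u^\perp}(\nabla_p\phi-\nabla_{p_*}\phi_*)+F\Big(\Pi_{u^\perp}:(\nabla_p^2\phi+\nabla_{p_*}^2\phi_*)-\tfrac{2(d-1)}{|u|^2}u\cdot(\nabla_p\phi-\nabla_{p_*}\phi_*)\Big).
\]
Here I used $|S^{d-2}_{k^\perp}|=|S^{d-2}|$ to rewrite the $k$-term with the same prefactor.

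\textbf{Step 3: divergence identity.} Identify the bracket multiplying $F$ with $D\cdot\big(\Pi_{u^\perp}(\nabla_p\phi-\nabla_{p_*}\phi_*)\big)$. Write $X=\nabla_p\phi-\nabla_{p_*}\phi_*$. Note that $DX=\nabla_p^2\phi+\nabla_{p_*}^2\phi_*$, since the mixed derivatives vanish. The key identity is
\[
D\cdot\Pi_{u^\perp}=\nabla_p\cdot\Pi_{u^\perp}-\nabla_{p_*}\cdot\Pi_{u^\perp}=-\frac{2(d-1)}{|u|^2}u .
\]
This follows from $\nabla_u\cdot(u\otimes u/|u|^2)=(d-1)u/|u|^2$, with the factor $2$ coming from $D$ acting on a function of $u$. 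With this, the product rule $D\cdot(\Pi X)=(D\cdot\Pi)\cdot X+\Pi:DX$ matches the bracket exactly, which yields the claimed limit after writing $\theta^2$ in terms of $\chi$ as in the statement's normalisation.

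\textbf{Main obstacle.} I expect the main difficulty to be bookkeeping rather than analysis. The second-order piece of $h$ along $k$, namely $(\cos\theta-1)k$, is what produces the $D\cdot\Pi_{u^\perp}$ term, and it has to be tracked carefully. I would also verify the divergence identity and the factor of $2$ on the cross term directly for $d=2$ and $d=3$ before trusting the general computation.
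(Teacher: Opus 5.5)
Your proposal is correct and takes essentially the same route as the paper. The paper defers the classical case to \cite{DH25C} and sketches the identical argument for the relativistic analogue in the appendix: a second-order Taylor expansion in $\omega-k=(\cos\theta-1)k+\sin\theta\,\sigma$, the moments $\int\sigma=0$ and $\int\sigma\otimes\sigma=\frac{|S^{d-2}|}{d-1}\Pi_{k^\perp}$ for the cross term, and the classical divergence identity $D\cdot\Pi_{u^\perp}=-2(d-1)u/|u|^2$, which absorbs the $(\cos\theta-1)k$ contribution. Your final step should be stated explicitly: your expansion gives $\theta^2=\varepsilon^2\chi^2/\pi^2$ times the bracket plus $O(\theta^3)$, so the lemma holds for $\theta^{-2}$ times the left-hand side (as in the appendix's relativistic version), not literally with $\chi^2$ on the right.
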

\begin{proof}
The proof of this lemma is a direct adaption of \cite[Lemma 3.2]{DH25C} for the kinetic wave equation by replacing $\kappa(f)=f$ there to $\kappa(f)=1+\alpha f$ with $\alpha\in\{-1,0,1\}$. Thus we omit it refer to \cite{DH25C} for the detail calculations.    
\end{proof}
To derive the small-angle limit of the (quantum) Boltzmann equations, we apply Lemma \ref{lem:grazing} by taking $\kappa(f)=1+\alpha f$. We recall that $Q^{\aB,\varepsilon}_\alpha (f)$ is the collision operator given by \eqref{Q:phonon} associated to the kernel $B^\varepsilon$ given in \eqref{def:B-varepsilon}. From the weak formulation, the identity 
\eqref{beta:ass} and Lemma \ref{lem:grazing}, we have 
\begin{align*}
\lim_{\varepsilon\to0}\langle Q^{\aB,\varepsilon}_\alpha (f),\phi\rangle&={}\lim_{\varepsilon\to0}\frac12\int_{\R^{3d}}\sigma ff_*\int_0^{\frac{\varepsilon}{2}} b^\varepsilon \int_{S^{d-2}_{k^\perp}}(1+\alpha f') (1+\alpha f_*')\overline\nabla \phi \\
&={}\frac12\int_{\R^{3d}}\sigma |p-p_*|^2\widetilde\nabla \phi\cdot\big(ff_*\widetilde\nabla \big((1+\alpha f)(1+\alpha f_*)\big)\\
&\quad -(1+\alpha f)(1+\alpha f_*)\widetilde\nabla(ff_*)\big)\\
&={}-\frac12\int_{\R^{3d}}\overline\sigma \widetilde\nabla \phi\cdot\Big(ff_*(1+\alpha f)(1+\alpha f_*) \wn \log\frac{f}{1+\alpha f}\Big)\\
&={}\langle Q_{\alpha}^{\aL} (f),\phi\rangle, 
\end{align*}
where we have used the property that $h'_\alpha(f)=\log \frac{f}{1+\alpha f}$ and the straightforwardly calculation
\begin{align*}
&ff_*(1+\alpha f)(1+\alpha f_*)\wn \log \frac{f}{1+\alpha f}\\
=&{}\Pi_{(p-p_*)^\perp}ff_*(1+\alpha f)(1+\alpha f_*)\Big(  \frac{\nabla_p f}{f(1+\alpha f)} -\frac{\nabla_{p_*} f_*}{f_*(1+\alpha f_*)}\Big) \\
=&{}\Pi_{(p-p_*)^\perp}\big( f_*(1+\alpha f_*) \nabla_p f -f(1+\alpha f) \nabla_{p_*} f_*\big) \\
=&{}\Pi_{(p-p_*)^\perp}\Big( f_*(1+\alpha f_*) \big((1+\alpha f)\nabla_p f-f\nabla_p(1+\alpha f)\big) \\
&\quad -f(1+\alpha f) \big((1+\alpha f_*)\nabla_{p_*} f_*-f_*\nabla_{p_*}(1+\alpha f_*)\big)\Big) \\
=&{}\Pi_{(p-p_*)^\perp}\Big( (1+\alpha f)(1+\alpha f_*) \wn(ff_*)-ff_*\wn\big((1+\alpha f)(1+\alpha f_*)\big)\Big).
\end{align*}
Hence, the operator $Q^{\aL}_\alpha$  is given by 
\begin{equation*}
\begin{aligned}
Q_{\alpha}^{\aL}(f)&=\frac12\widetilde \nabla\cdot \Big(\overline\sigma ff_*(1+\alpha f)(1+\alpha f_*) \wn \log\frac{f}{1+\alpha f}\Big)\\
&=\nabla_p\cdot \int_{\R^d}
\overline\sigma\Pi_{(p-p_*)^\perp}\big(f_*(1+\alpha f_*)\nabla_p f -f(1+\alpha f)\nabla_{p_*} f_* \big).\label{Q-phonon:Landau}
\end{aligned}
\end{equation*}
To derive the small-angle limit of the kinetic wave equation, we apply Lemma \ref{lem:grazing} with  $\kappa(f)=f$ and $\phi=h'_{wave}(f)=f^{-1}$. By similar computations as above, we obtain
\begin{equation*}
\lim_{\varepsilon\to0}\langle Q^{\aB,\varepsilon}_{wave}(f),\phi\rangle = \langle Q^{\aL}_{wave}(f),\phi\rangle,  
\end{equation*}
where the Landau wave collision operator $Q^{\aL}_{wave}(f)$ is given by
\begin{equation*}
Q^{\aL}_{wave}(f)=-\nabla_p\cdot\int_{\R^{d}}\overline\sigma(ff_*)^2\Pi_{(p-p_*)^\perp}\big(\nabla_v f^{-1}-\nabla_{p_*}f^{-1}_*\big) \dd p_*.\label{Q-wave:Landau}   
\end{equation*}
More details for this limit can be found in \cite{DH25C}. Finally, to derive the linear Landau equation, we apply  Lemma \ref{lem:grazing} by taking $\kappa=1$ and $\phi=f$, and obtain
\begin{equation*}
\lim_{\varepsilon\to0}\langle Q^{\aB,\varepsilon}_{linear}(f),\phi\rangle = \langle Q^{\aL}_{linear}(f),\phi\rangle,  
\end{equation*}
where the linear Landau operator $Q^{\aL}_{linear}(f)$ is given by
\begin{equation*}
Q^{\aL}_{linear}(f)=\nabla_p\cdot \int_{\R^d}
\overline\sigma\Pi_{(p-p_*)^\perp}\big(\nabla_p f -\nabla_{p_*} f_* \big),\label{Q-linear:Landau}    
\end{equation*}
In summary, we have shown that, in the small-angle limit in the non-relativistic setting, the quantum /wave/linear Boltzmann collision operators, $Q^{\aB,\varepsilon}_\alpha, Q^{\aB,\varepsilon}_{wave}$ and $Q^{\aB,\varepsilon}_{linear}$ converges respectively to the quantum/wave/linear Landau operators $Q^{\aL}_{\alpha}, Q^{\aL}_{wave}$ and $Q^{\aL}_{linear}$. Combining with appropriate compactness results, which we do not investigate here, we expect convergence of the dynamics, that is, using the common form, the Boltzmann-type equation \eqref{grazing-limit} converges to the non-relativistic Landau-type of equation
\begin{equation}
    \label{uni:Landau}
\d_t f+\frac{p}{m}\cdot \nabla_q f=Q^{\aL}(f),
\end{equation}
where $Q^{\aL}(f)$ is either $Q^{\aL}_{\alpha}, Q^{\aL}_{wave}$ or $Q^{\aL}_{linear}$. 
\subsection{GENERIC formulation of the non-relativistic Landau-type equation}
In this section, we will show that the limiting Landau-type equation \eqref{uni:Landau} can be cast into the GENERIC framework. In fact, the equation \eqref{uni:Landau} can be written in the form of \eqref{uni-eq}
\begin{equation*}
    \d_t f+\frac{p}{m}\cdot\nabla_q f=\frac12\d\aR_{\aL}^*\big(f,h'(f)\big),
\end{equation*}
where the dissipation potential $\aR^*_{\aL}$ is given by 
\begin{equation*}
\label{def:aR-L}
\aR^*_{\aL}(f,\av)=\int_{\R^d\times S^{d-1}}\overline\sigma \Theta_{\aL}(f) \Psi^*(\wn \av)   \dd\omega \dd p_*.
\end{equation*}
In the case of \eqref{Q:phonon}, \eqref{Q:wave} and \eqref{Q:linear} the quantities $\Psi^*,\Theta_{\aL}(f)$ and $h(f)$ are given as in Table \ref{tab:comptb-L} and \eqref{H-all}. Notice that the small-angle (grazing collision) limit of Boltzmann-type equations—in particular, the (quantum) Boltzmann equation associated with a non-quadratic ($\cosh$) GENERIC structure leads to Landau-type equations, which are associated only with a quadratic GENERIC one ($\Psi^*(r)=r^2/2$).


\begin{table}
\centering
\renewcommand{\arraystretch}{1.3}
\begin{tabular}{@{}lccccc@{}}
\toprule
Models & $\Psi^*(r)$ & $\Theta_{\aL}(f)$ &  $h'(f)$  \\
\midrule
  (Quantum) Landau 
  & $r^2/2$   & $ff_*(1+\alpha f)(1+\alpha f_*)$ & $\log \frac{f}{1+\alpha f}$   \\
 Wave Landau & $r^2/2$ &  $(ff_*)^2$ & $-f^{-1}$   \\
Linear Landau  & $r^2/2$ & $1$ & $f$  \\
\bottomrule
\end{tabular}
\caption{Weight function for Landau type of equations}
\label{tab:comptb-L}
\end{table}

The following weak formulation of $Q_{\aL}(f)$ holds 
\begin{align*}
\langle Q_{\aL}(f),\phi\rangle=-\frac{1}{2}\int_{\G}\overline\sigma \Theta_{\aL}\wn\varphi\cdot \wn \dd\cH(f) \dd p_*\dd p\dd q.
\end{align*}
Similar to the case of Boltzmann type of equations, the mass, momentum and energy conservation law \eqref{app-cl} holds for \eqref{uni:Landau}, since $\wn(1,p,e)=0$.
The following entropy identity holds at least formally
\begin{equation}
\label{cH-Landau}
\cH(f_t)-\cH(f_0)=-\int_0^t  \cD_{\aL}(f_s)\dd s \quad\forall t\in[0,T],
\end{equation}
where the entropy dissipation is given by 
\begin{equation*}
  \cD_{\aL}(f)=\frac{1}{2}\int_{\G}\overline\sigma \Theta_{\aL}|\wn h'(f)|^2\dd p_* \dd p\dd q\ge 0.
\end{equation*}
The Landau type of equations \eqref{uni:Landau} can be cast as GENERIC systems with the 
following building blocks $\{\aL,\d\aR^*_{\aL},\aaE,\aS\}$, where $\aL,\aaE, \aS$  are given as in \eqref{EL} and \eqref{MLL}. More precisely, $\aaE$ and $\d\aR^{*}_{\aL}$ are given by
\begin{equation*}
    \label{block:Landau}
\aaE(f)=\int_{\Do}\frac{|p|^2}{2m}f\quad\text{and}\quad\d \aR^*_{\aL}(f,\xi)=-\frac{1}{2}\wn\cdot\big(\overline\sigma\Theta_{\aL}(f)\wn \xi\big).
\end{equation*}

\medskip

\subsection{The small-angle limit in the relativistic setting}
\label{sub-sec:Landau-c}
The following lemma is the relativistic counterpart of Lemma \ref{lem:grazing} whose proof will be provided in Appendix \ref{app:Lorentz}.
\begin{lemma}
\label{lem:grazing-c}
  Let $\varepsilon\in(0,1)$. Let $\kappa(f)=a+\alpha f$ and $a,\alpha\in\{-1,0,1\}$. Let $\varepsilon \chi=\pi\hat \theta$.
For any $f,\phi\in \cS(\R^{2d})$, we have, as $\varepsilon\to0$ 
\begin{multline*}
\int_{S^{d-2}_{k^\perp}}\kappa(f')\kappa(f_*')\onc \phi \rightarrow \frac{\chi^2|S^{d-2}|}{8(d-1)}\Big(2g^2\big((\nabla_p-\nabla_{p_*})\kappa( f)\kappa( f_*)\big)\cdot S(p,p_*)\big(\nabla_p\phi-\nabla_{p_*}\phi_*\big)\\
\qquad+\kappa( f)\kappa( f_*)(\nabla_p-\nabla_{p_*})\cdot\big(g^2S(p,p_*)(\nabla_p \phi-\nabla_{p_*} \phi_*)\big)\Big).  
\end{multline*}
\end{lemma}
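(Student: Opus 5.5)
Our plan is to reduce the relativistic statement to the Taylor expansion behind Lemma~\ref{lem:grazing}, applied in the parametrisation \eqref{post-p-c}. Write $P=p+p_*$ and $A:=I_d+(\rho-1)\frac{P\otimes P}{|P|^2}$, so that $\hat p'=\frac P2+\frac g2A\omega$ and $\hat p_*'=\frac P2-\frac g2A\omega$. Choose $k\in S^{d-1}$ with $\hat p'=p$ exactly when $\omega=k$; then $\frac g2Ak=\frac{p-p_*}{2}$. Write $\omega=\cos\hat\theta\,k+\sin\hat\theta\,\xi$ with $\xi\in S^{d-2}_{k^\perp}$, where $\hat\theta$ agrees with the scattering angle \eqref{theta-c}; this identification is checked in the centre-of-mass frame, using the Lorentz boost of Appendix~\ref{app:Lorentz}. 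This gives
\begin{align*}
\hat p'-p=\tfrac g2\big((\cos\hat\theta-1)Ak+\sin\hat\theta\,A\xi\big),\qquad \hat p_*'-p_*=-(\hat p'-p),
\end{align*}
so the displacement is $\frac g2\sin\hat\theta\,A\xi+O(\hat\theta^2)$.

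The next step is to expand $\kappa(f')\kappa(f_*')\onc\phi$ to second order in $\hat\theta=\varepsilon\chi/\pi$. Set $\Delta:=\hat p'-p$ and $D:=\nabla_p-\nabla_{p_*}$. Then
\begin{align*}
\onc\phi=\Delta\cdot D\phi+\tfrac12\Delta^{\otimes2}:(\nabla_p^2\phi+\nabla_{p_*}^2\phi_*)+O(|\Delta|^3),
\end{align*}
and similarly $\kappa(f')\kappa(f_*')=\kappa(f)\kappa(f_*)+\Delta\cdot D(\kappa(f)\kappa(f_*))+O(|\Delta|^2)$.

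We then average over $\xi\in S^{d-2}_{k^\perp}$. The odd moments vanish, and $\int_{S^{d-2}_{k^\perp}}\xi\otimes\xi=\frac{|S^{d-2}|}{d-1}\Pi_{k^\perp}$. Hence the leading terms are of order $\hat\theta^2$, and
\begin{align*}
\int\Delta\otimes\Delta\approx\frac{g^2\hat\theta^2|S^{d-2}|}{4(d-1)}A\Pi_{k^\perp}A.
\end{align*}
We define $S(p,p_*)$ through this identity, so that $g^2S=g^2A\Pi_{k^\perp}A$. The $(\cos\hat\theta-1)Ak$ term contributes at order $\hat\theta^2$ in the first-order part, namely $-\frac{g\hat\theta^2}{4}|S^{d-2}|\,Ak\cdot D\phi$. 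This term must combine with the second-order Hessian terms to give the divergence form $D\cdot(g^2S\,D\phi)$. For this we need the identity $D\cdot(g^2A\Pi_{k^\perp}A)=-(d-1)\,g\,Ak$, with $g$, $A$ and $k$ viewed as functions of $(p,p_*)$; this is the relativistic analogue of $\nabla\cdot(|z|^2\Pi_{z^\perp})=-(d-1)z$. The mixed term $\Delta\cdot D(\kappa\kappa_*)\,\Delta\cdot D\phi$ gives the first term of the claimed limit, with the factor $2$ and the constant $\frac{\chi^2|S^{d-2}|}{8(d-1)}$ after rescaling $\hat\theta=\varepsilon\chi/\pi$. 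The remainders are $O(\hat\theta^3)$ uniformly on compacts because $f,\phi\in\cS$.

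The main obstacle is the divergence identity for $g^2A\Pi_{k^\perp}A$. Here $g$, $\rho$ and $k$ depend on $p$ and $p_*$ through $p_0$ and $p_{0*}$, so the classical computation does not transfer directly. We would first try to prove it in the centre-of-mass frame, where $P=0$ and $A=I_d$. There the identity reduces to the classical one with $|p-p_*|$ replaced by $g$, and we would then transport it back using the Lorentz covariance of $g$ and of the measure $\frac{\dd p}{p_0}$. If that fails, we would fall back on computing $D(g^2)$ and $D(Ak)$ explicitly, using $\nabla_p p_0=p/p_0$ and $s=4(mc)^2+g^2$.
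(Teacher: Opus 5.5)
Your lab-frame expansion is correct, and it is more direct than the paper's route. The paper boosts to the centre-of-mass frame, does the classical computation there, and converts back with \eqref{id-nabla-tilde}. Since $g$, $A$, $k$ depend only on $(p,p_*)$ and $gAk=p-p_*$, your expansion gives exactly (with $\kappa_*=\kappa(f_*)$)
\begin{multline*}
\lim_{\hat\theta\to0}\hat\theta^{-2}\int_{S^{d-2}_{k^\perp}}\kappa(f')\kappa(f_*')\,\onc\phi\\
=\frac{|S^{d-2}|}{8(d-1)}\Big(2g^2D(\kappa\kappa_*)\cdot SD\phi+\kappa\kappa_*\big(g^2S:(\nabla_p^2\phi+\nabla_{p_*}^2\phi_*)-2(d-1)(p-p_*)\cdot D\phi\big)\Big).
\end{multline*}
The coefficient is $2(d-1)$, not $(d-1)$, because $D=2\nabla_z$ on functions of $z=p-p_*$. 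The genuine gap is the identity you then need, $D\cdot(g^2S)=-2(d-1)(p-p_*)$: it is false in the relativistic case.

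Take $P=p+p_*$, $z=p-p_*$, $u=p/p_0$, $E=p_0+p_{0*}$. Then $g^2S=g^2I+\frac{g^2}{s}P\otimes P-z\otimes z$ because $A^2=I+P\otimes P/s$. Moreover $DP=0$, $D(g^2)=2E(u-u_*)$, $D(g^2/s)=\frac{8(mc)^2}{s^2}E(u-u_*)$ and $(u-u_*)\cdot P=\frac{(p_0-p_{0*})s}{2p_0p_{0*}}$. These give
\[
D\cdot(g^2S)+2(d-1)z=\frac{p_0-p_{0*}}{p_0p_{0*}}\Big((p_0-p_{0*})\,z-\frac{E\,g^2}{s}\,P\Big),
\]
which is nonzero whenever $p_0\neq p_{0*}$ and $p,p_*$ are linearly independent.

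Here is a concrete check: $d=2$, $m=c=1$, $p=(\sqrt3,0)$, $p_*=(0,2\sqrt2)$, $\kappa\equiv1$, and $\phi=\frac12|p|^2$ near $p$ and $p_*$. Then exactly $\onc\phi=\frac{g^2}{4s}\big((P\cdot\omega)^2-(P\cdot k)^2\big)$, with $g^2=10$, $s=14$, $|P|^2=11$, $(P\cdot k)^2=\frac75$. The true limit is $\frac{41}{14}$, while the right-hand side of the lemma gives $\frac{37}{14}$. So the statement with the plain Euclidean divergence is not true pointwise, and no argument can prove it. The paper's proof misses this because \eqref{id-nabla-tilde} transfers only first derivatives. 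The second-order term is exactly where the on-shell boost contributes a Jacobian, and your centre-of-mass fallback would run into the same thing.

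What is true is the weighted identity
\[
D\cdot\Big(\frac{g^2S}{p_0p_{0*}}\Big)=-2(d-1)\,\frac{p-p_*}{p_0p_{0*}}.
\]
You can check it by the same kind of computation, using $D(p_0p_{0*})=p_{0*}u-p_0u_*$ and $S(u-u_*)=0$. It is the infinitesimal form of the invariance of $\frac{\dd p\,\dd p_*}{p_0p_{0*}}\dd\omega$ under $(p,p_*)\leftrightarrow(\hat p',\hat p_*')$. That is the precise version of the ``covariance of $\dd p/p_0$'' you wanted to use.

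With this identity your expansion proves the lemma with $\kappa\kappa_*D\cdot(g^2SD\phi)$ replaced by $\kappa\kappa_*\,p_0p_{0*}\,D\cdot\big(\frac{g^2}{p_0p_{0*}}SD\phi\big)$. This corrected form is what the weak formulation actually needs. After multiplying by $B^c_\varepsilon ff_*$, the M\o ller factor $1/(p_0p_{0*})$ supplies the weight. The remaining factor $g\sqrt s\,\sigma^c(g)$ is a function of $p^\mu\cdot(p_*)_\mu$, and it passes through the divergence because $S\,D(p^\mu\cdot(p_*)_\mu)=E\,S(u-u_*)=0$. Hence the corrected lemma still yields the relativistic quantum Landau operator $Q^{\aL,c}_\alpha$ as stated.
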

Using this lemma, by similar computations as in the non-relativistic setting, we obtain that the relativistic Boltzmann type of equations \eqref{uni:Boltzmann-c} converge to the following relativistic Landau type of equations
\begin{equation}
    \label{uni:Landau-c}
    \d_t f+\frac{cp}{p_0}\cdot \nabla_q f=Q^c_{\aL}(f).
\end{equation}
Here $Q^c_{\aL}(f)$ is either the relativistic (quantum) Landau, wave Landau, or linear Landau collision operator. They are given respectively by
\begin{align}
Q_{\alpha}^{\aL,c}(f)&=\nabla_p\cdot \int_{\R^d}
 v_c\overline \sigma^c S(p,p_*)\big(f_*(1+\alpha f_*)\nabla_p f -f(1+\alpha f)\nabla_{p_*} f_* \big)\dd p_*,\notag\\ 
Q^{\aL,c}_{wave}(f)
&=-\nabla_p\cdot\int_{\R^{d}} v_c\overline \sigma^c(ff_*)^2 S(p,p_*)\big(\nabla_v f^{-1}-\nabla_{p_*}f^{-1}_*\big) \dd p_*,\notag\\
Q^{\aL,c}_{linear}(f)&=\nabla_p\cdot \int_{\R^d}
 v_c\overline \sigma^c S(p,p_*)\big(\nabla_p f -\nabla_{p_*} f_* \big)\dd p_*,\notag
\end{align}
where the kernel $v_c\overline\sigma^c$ is given by \eqref{def:sigma-bar}.
\subsection{GENERIC formulation of the relativistic Landau equation}
We will formulate the limiting relativistic Landau equation \eqref{uni:Landau-c} into the GENERIC framework. We define the relativistic Landau gradient
\begin{align*}
\wnc \varphi=\Pi_{\hat k^\perp}  \widetilde\Lambda(\nabla_{  p}\varphi-\nabla_{  p_*}\varphi_*),
\end{align*}
where $\hat k= \frac{\widetilde  p-\widetilde  p_*}{|\widetilde  p-\widetilde  p_*|}\in S^{d-1}$ is defined as in \eqref{def:k-c}, $\widetilde  p$  and $\widetilde  p_*$ denote the Lorentz transformation of $p$ and $p_*$. Let $\Lambda\in \R^{(d+1)\times (d+1)}$ denote the matrix of Lorentz transformation
 \begin{align*}
\Lambda=\begin{pmatrix}
           \rho & -\rho v^T\\
           -\rho v & \widetilde\Lambda
       \end{pmatrix}\quad\text{and}\quad 
        \widetilde \Lambda\defeq I_d+(\rho-1)\frac{v\otimes v}{|v|^2}\in\R^{d\times d}.
    \end{align*}
    The definition of $\rho$ and $v$, and the details of Lorentz transformation can be found in Appendix \ref{app:Lorentz}.

For $\varphi=\varphi(q,p)$ and $G=G(q,p,p_*)\in\R^d$, the following integration by parts formula holds
\begin{align*}
\int_{\G}G\cdot\wnc\phi\dd\eta=-\int_{\Do}(\wnc\cdot G)\phi\dd p\dd q,
\end{align*}
where $\wnc\cdot G$ is given by 
\begin{align*}
\wnc\cdot G(q,p)=&\nabla_p\cdot\int_{\R^d} \widetilde\Lambda^T \Pi_{\hat k^\perp} \big(G(q,p,p_*)-G(q,p_*,p)\big) \dd p_*.
\end{align*}
We define 
\begin{equation}
    \label{def:S}
\begin{aligned}
&S(p,p_*)\defeq\widetilde\Lambda^T\Pi_{\hat k^\perp}\widetilde\Lambda\\
=&{}\Big[\big(\big(p^\mu\cdot (p_\mu)_*\big)^2-(mc)^4\big)I_d-(mc)^2(p\otimes p+p_* \otimes p_*)\\
&+\big(p^\mu\cdot (p_\mu)_*\big)^2(p\otimes p_*+p_*\otimes p)\Big]\Big(\big(p^\mu\cdot (p_\mu)_*\big)^2-(mc)^4\Big)^{-1}.
\end{aligned}
\end{equation}
The relativistic Landau equation \eqref{uni:Landau-c} can be written in the form of \eqref{uni-eq}
\begin{equation*}
    \d_t f+\frac{cp}{p_0}\cdot\nabla_q f=-\frac12\d\aR^{c*}_{\aL}\big(f,h'(f)\big),
\end{equation*}
where the relativistic dissipation potential $\aR^{c*}_{\aL}$ is given by 
\begin{equation*}
\label{def:aR-L-c}
\aR^{c*}_{\aL}(f,\av)=\int_{\R^d\times S^{d-1}}v_c\overline\sigma^c\Theta_{\aL}(f)\Psi^*(\wnc \av)   \dd\omega \dd p_*,
\end{equation*}
the quantities $\Psi^*,\Theta_{\aL}$ and $h(f)$ are given as in Table \ref{tab:comptb-L} and \eqref{H-all}.
The equation \eqref{uni:Landau-c} associated with the mass, momentum and energy conservation laws  \eqref{app-cl}, since $\wnc(1,p,e)=0$. The entropy identity \eqref{cH-Landau} holds at least formally with the relativistic entropy dissipation given by 
\begin{equation*}
  \cD_{\aL}^c(f)=\frac{1}{2}\int_{\G}v_c\overline\sigma^c \Theta_{\aL}(f)|\wnc h'(f)|^2\dd p_* \dd p\dd q\ge 0.
\end{equation*}
The relativistic Landau type of equations \eqref{uni:Landau-c} can be cast as GENERIC systems with the 
building blocks $\{\aL,\d\aR^{c*}_{\aL},\aaE,\aS\}$, where $\aL,\aaE, \aS$ are given as in \eqref{EL} and \eqref{MLL}. More precisely, $\aaE$ and $\d\aR^{c*}_{\aL}$ are given by
\begin{equation*}
\label{block:Landau-c}
\begin{aligned}
\aaE^c(f)=\int_{\Do}cp_0f\quad\text{and}\quad\d\aR^{c*}_{\aL}(f,\xi) =\frac{1}{2}\wnc\cdot\big(v_c\overline\sigma^c\Theta_{\aL}(f)\wnc \xi\big).
\end{aligned}
\end{equation*}
\appendix

\section{Lorentz transformation}\label{app:Lorentz}

In this appendix, we summarise the Lorentz transformation, see for instance \cite{Str11,HJ24} for more details, and provide useful lemmas for the relativistic Boltzmann and Landau equations.

For any given $p$ and $p_*\in\R^d$, we define the quantities
    \begin{align*}
       v=\frac{p+p_*}{p_0+p_{0*}}\in\R^d\quad\text{and}\quad  \rho=\frac{p_0+p_{0*}}{\sqrt s}\in\R.
    \end{align*}
The Lorentz transformation and its inverse are given by the operators $\Lambda$ and $\Lambda^{-1}:\R^{d+1}\to\R^{d+1}$ 
    \begin{equation}
    \label{lt}
     \begin{aligned}
      \Lambda=\begin{pmatrix}
           \rho & -\rho v^T\\
           -\rho v & \widetilde\Lambda
       \end{pmatrix}\quad\text{and}\quad \Lambda^{-1}=\begin{pmatrix}
           \rho & \rho v^T\\
           \rho v & \widetilde\Lambda
       \end{pmatrix},
    \end{aligned}
    \end{equation}
    where $\widetilde \Lambda\in \R^{d\times d}$ and given by 
    \begin{equation*}
    \label{def:wLA}
        \widetilde \Lambda\defeq I_d+(\rho-1)\frac{v\otimes v}{|v|^2}.
    \end{equation*}
We recall the definition of  momentum and energy in the centre-of-mass framework \eqref{s-g}      
\begin{gather*}
s=(p^\mu+p^\mu_*)\cdot  (p_\mu+(p_*)_\mu)=(p_0+p_{0*})^2-|p+p_*|^2,\\
g=\sqrt{-(p^\mu-p^\mu_*)\cdot  (p_\mu-(p_*)_\mu)}=\sqrt{-(p_0-p_{0*})^2+|p-p_*|^2}.
    \end{gather*}
We denote the Lorentz transformation of $p^\mu$ by 
\begin{equation}
\label{p-mu}
(\widetilde  p_0,\widetilde  p)^T:=\widetilde  p^\mu:=\Lambda p^\mu.   
\end{equation}
We define
\begin{align}
\label{def:k-c}
    \hat k\defeq \frac{\widetilde  p-\widetilde  p_*}{|\widetilde  p-\widetilde  p_*|}\in S^{d-1}.
\end{align}
The Lorentz transformation of $p^\mu$ and $p^{\mu}_*$ are given by the following lemma.
    \begin{proposition}
    \label{app-lem:lorentz}
        For any $p^\mu,\, p^\mu_*\in \R^{d+1}$, we have 
        \begin{equation*}
        \widetilde  p^\mu=(\sqrt s/2,g\hat k/2)^T\quad \text{and}\quad \widetilde  p^\mu_*=(\sqrt s/2,-g\hat k/2)^T. 
        \end{equation*}
    \end{proposition}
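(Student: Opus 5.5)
The plan is to compute $\widetilde p^\mu=\Lambda p^\mu$ componentwise and identify the resulting quantities with $s$ and $g$. Set $P_0:=p_0+p_{0*}$ and $P:=p+p_*$. Then $v=P/P_0$, $\rho=P_0/\sqrt s$, and $s=P_0^2-|P|^2$, so $\rho^2(1-|v|^2)=1$. We may assume $P\neq0$; otherwise $\widetilde\Lambda=I_d$, $\rho=1$, and $\Lambda$ is the identity with the obvious modification.

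\textbf{Step 1 (time components).} Using the first row of $\Lambda$,
\[
\widetilde p_0=\rho\,(p_0-v\cdot p),\qquad \widetilde p_{0*}=\rho\,(p_{0*}-v\cdot p_*).
\]
Adding these gives
\[
\widetilde p_0+\widetilde p_{0*}=\rho\,(P_0-|P|^2/P_0)=\rho\, s/P_0=\sqrt s .
\]
Subtracting them gives
\[
\widetilde p_0-\widetilde p_{0*}=\frac{\rho}{P_0}\Big(P_0(p_0-p_{0*})-(p+p_*)\cdot(p-p_*)\Big)=\frac{\rho}{P_0}\Big(p_0^2-p_{0*}^2-|p|^2+|p_*|^2\Big)=0,
\]
because the on-shell condition gives $p_0^2-|p|^2=(mc)^2=p_{0*}^2-|p_*|^2$. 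Hence $\widetilde p_0=\widetilde p_{0*}=\sqrt s/2$.

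\textbf{Step 2 (spatial components).} Using the second block row of $\Lambda$,
\[
\widetilde p+\widetilde p_*=-\rho v P_0+\widetilde\Lambda P=-\rho P+\rho P=0,
\]
since $\widetilde\Lambda P=P+(\rho-1)P=\rho P$ because $P$ is parallel to $v$. So $\widetilde p=-\widetilde p_*$, and by the definition \eqref{def:k-c} of $\hat k$ we get $\widetilde p=\tfrac12|\widetilde p-\widetilde p_*|\,\hat k$ and $\widetilde p_*=-\widetilde p$.

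\textbf{Step 3 (the modulus).} It remains to show $|\widetilde p-\widetilde p_*|=g$. Rather than expanding $\widetilde\Lambda$ directly, which is the only potentially messy step, I will use that $\Lambda$ preserves the Minkowski form, i.e. $\Lambda^T\eta\Lambda=\eta$ with $\eta=\mathrm{diag}(1,-I_d)$. This is verified blockwise from $\rho^2(1-|v|^2)=1$ and $\widetilde\Lambda^2-\rho^2 v\otimes v=I_d$; the latter holds because on $v$ both sides give $\big(\rho^2-\rho^2|v|^2\big)v=v$, and on $v^\perp$ both sides act as the identity. Minkowski invariance then gives
\[
g^2=-(p^\mu-p_*^\mu)\cdot(p_\mu-(p_*)_\mu)=-(\widetilde p_0-\widetilde p_{0*})^2+|\widetilde p-\widetilde p_*|^2=|\widetilde p-\widetilde p_*|^2,
\]
where the last equality uses Step 1. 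Hence $\widetilde p=g\hat k/2$ and $\widetilde p_*=-g\hat k/2$.

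As a consistency check, Minkowski invariance applied to $\widetilde p^\mu$ alone gives $s/4-g^2/4=(mc)^2$, which matches the identity $s=4(mc)^2+g^2$.
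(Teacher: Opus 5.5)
Your proof is correct. Steps 1 and 2 coincide with the paper's computation: it obtains $\widetilde p+\widetilde p_*=(p+p_*)-(p_0+p_{0*})v=0$ the same way, and the identity $v\cdot(p-p_*)=p_0-p_{0*}$ it relies on is exactly your on-shell cancellation. Step 3, however, takes a different route.

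The paper expands $\widetilde p-\widetilde p_*=(p-p_*)+(p_0-p_{0*})\big(-\rho+(\rho-1)|v|^{-2}\big)v$ and simplifies $|\widetilde p-\widetilde p_*|^2$ by brute force, reducing the scalar coefficient to $-1$ via $1-|v|^2=\rho^{-2}$. That computation does not use the time components, and the paper then only asserts them ``similarly''. You instead verify once that $\Lambda^T\eta\Lambda=\eta$. You then read off $|\widetilde p-\widetilde p_*|^2=g^2$ from the invariance of $g^2$ together with $\widetilde p_0=\widetilde p_{0*}$. So your Step 3 rests on Step 1, which you actually carry out.

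The paper's route needs only two scalar identities and no structural fact about $\Lambda$. Yours avoids the messy expansion and explains why the invariants $s$ and $g$ appear. It also gives a reusable fact: for instance, it shows directly that $\Lambda^{-1}(\sqrt s/2,g\omega/2)^T$ lies on the mass shell, as needed for the parametrisation in Proposition \ref{app-prop:post-p}.

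Two minor remarks. First, the off-diagonal blocks of $\Lambda^T\eta\Lambda$ vanish because $\widetilde\Lambda v=\rho v$ (already used in your Step 2), not because of the two identities you list. Second, as in the paper, the statement tacitly needs $p\neq p_*$ so that $g>0$ and $\hat k$ is defined.
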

       \begin{proof}
We first show that 
\begin{equation*}
    \label{tilde-p}
    \widetilde  p+\widetilde  p_*=0\quad\text{and}\quad |\widetilde  p|=|\widetilde  p_*|=\frac{g}{2}.
\end{equation*}
By definition, we have
\begin{align*}
 \widetilde  p=p+\Big(-\rho p_0+(\rho-1)\frac{v\cdot p}{|v|^2}\Big)v\quad\text{and}\quad
 \widetilde  p_*=p_*+\Big(-\rho p_{0*} +(\rho-1)\frac{v\cdot p_*}{|v|^2}\Big)v,
\end{align*}
and we have
\begin{align*}
 \widetilde  p+\widetilde  p_*
 &=(p+p_*)+\Big(-\rho (p_0+p_{0*}) +(\rho-1)\frac{(p_0+p_{0*})v\cdot v}{|v|^2}\Big)v\\
 &=(p+p_*)-(p_0+p_{0*})v=0.
\end{align*}
By using of the identities $v\cdot (p-p_*)=p_0-p_{0*}$ and $1-|v|^2
=\rho^{-2}$, we have 
\begin{align*}
 \widetilde  p-\widetilde  p_*&=(p-p_*)+\Big(-\rho (p_0-p_{0*}) +(\rho-1)\frac{v\cdot (p-p_*)}{|v|^2}\Big)v\\
 &=(p-p_*)+(p_0-p_{0*})\Big(-\rho +\frac{\rho-1}{|v|^2}\Big)v,
\end{align*}
and 
\begin{align*}
 |\widetilde  p-\widetilde  p_*|^2&=|p-p_*|^2+(p_0-p_{0*})^2|v|^2(-\rho+(\rho-1)|v|^{-2})^2\\
 &\quad +2(p_0-p_{0*})^2(-\rho+(\rho-1)|v|^{-2})\\
 &=|p-p_*|^2+(p_0-p_{0*})^2(\rho^2|v|^2+\frac{\rho^2-1}{|v|^2}-2\rho^2)\\
  &=|p-p_*|^2+(p_0-p_{0*})^2(\rho^2|v|^2-\rho^2)\\
  &=|p-p_*|^2-(p_0-p_{0*})^2=g^2.
\end{align*}
Hence, we have $\widetilde  p=g\hat k/2$ and $\widetilde  p_*=-g\hat sk/2$.

Similarly, we have $\widetilde  p_0+\widetilde  p_{0*}=\sqrt s$ and $\widetilde  p_0-\widetilde  p_{0*}=0$. Hence, we have $\widetilde  p_0=\widetilde  p_{0*}=\sqrt s/2$.
\end{proof}

We derive the post-collision momenta \eqref{post-p-c} that fulfil the momentum and energy conservation laws.
\begin{proposition}
    \label{app-prop:post-p}
    For pre- and post-collision status $p^\mu,\,p^\mu_*$ and $(\hat p^\mu),\,(\hat p^\mu)'_*$ satisfying the following momentum and energy conservation laws
\begin{equation}
    \label{app-cl}
p+p_*=\hat p'+\hat p_*'\quad\text{and}\quad p_0+p_{0*}=\hat p_0'+\hat p_{0*}',
\end{equation}
the post-collision momentum $p',\,p_*'$ are given by \eqref{post-p-c}
\begin{equation*}
    \begin{aligned}
     \hat p'&=\frac{p+p_*}{2}+\frac{g}{2}\Big(I_d+(\rho-1)\frac{v\otimes v}{|v|^2}\Big)\omega,\quad 
      \hat p_*'=\frac{p+p_*}{2}-\frac{g}{2}\Big(I_d+(\rho-1)\frac{v\otimes v}{|v|^2}\Big)\omega
    \end{aligned}
\end{equation*}
for some $\omega\in S^{d-1}$. The  post-collision energy $\hat p_0'$ and $\hat p_{0*}'$ are given by
\begin{equation*}
\label{ee}
    \begin{aligned}
    \hat p_0'=\frac{p_0+p_{0*}}{2}+\frac{g}{2}\frac{p+p_*}{\sqrt s}\cdot \omega\quad\text{and}\quad
     \hat p_{0*}'=\frac{p_0+p_{0*}}{2}-\frac{g}{2}\frac{p+p_*}{\sqrt s}\cdot \omega.
    \end{aligned}
\end{equation*}

\end{proposition}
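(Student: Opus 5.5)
The plan is to work in the centre-of-mass frame given by the Lorentz transformation $\Lambda$ of \eqref{lt}, where Proposition \ref{app-lem:lorentz} already shows that $\widetilde p^\mu=(\sqrt s/2,g\hat k/2)^T$ and $\widetilde p_*^\mu=(\sqrt s/2,-g\hat k/2)^T$. The conservation laws \eqref{app-cl} say that the total four-momentum is preserved: $p^\mu+p^\mu_*=(\hat p^\mu)'+(\hat p^\mu)'_*$. Since $\Lambda$ is linear, the same holds after transformation. In the new frame the total spatial momentum vanishes and the total energy equals $\sqrt s$.

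The post-collisional four-vectors also satisfy the on-shell condition, and $\Lambda$ preserves the Minkowski product. Write $\widetilde{p}'^\mu=\Lambda (\hat p^\mu)'$ and $\widetilde{p}'^\mu_*=\Lambda(\hat p^\mu)'_*$. Then $\widetilde p'+\widetilde p'_*=0$, so $|\widetilde p'|=|\widetilde p'_*|$. Their energies $\sqrt{(mc)^2+|\widetilde p'|^2}$ are therefore equal, and since they sum to $\sqrt s$, each equals $\sqrt s/2$. The on-shell relation then gives $|\widetilde p'|^2=s/4-(mc)^2=g^2/4$, using $s=4(mc)^2+g^2$. Hence
\[
\widetilde p'^\mu=(\sqrt s/2,\ g\omega/2)^T,\qquad \widetilde p'^\mu_*=(\sqrt s/2,\ -g\omega/2)^T
\]
for some $\omega\in S^{d-1}$. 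This is the step that produces the parameter $\omega$.

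Next apply $\Lambda^{-1}$ from \eqref{lt}. For the spatial part,
\[
\hat p'=\rho v\,\frac{\sqrt s}{2}+\widetilde\Lambda\,\frac{g\omega}{2}.
\]
Because $\rho v\sqrt s=p+p_*$ by the definitions of $v$ and $\rho$, this becomes
\[
\hat p'=\frac{p+p_*}{2}+\frac g2\,\widetilde\Lambda\omega .
\]
The formula for $\hat p'_*$ follows in the same way with $-\omega$. For the energy component,
\[
\hat p_0'=\rho\,\frac{\sqrt s}{2}+\rho v\cdot\frac{g\omega}{2}.
\]
Here $\rho\sqrt s=p_0+p_{0*}$ and $\rho v=(p+p_*)/\sqrt s$, which gives the stated expression for $\hat p_0'$, and likewise for $\hat p'_{0*}$. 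Finally, $v$ is parallel to $p+p_*$, so $v\otimes v/|v|^2$ is the same matrix that appears in \eqref{post-p-c}.

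The main obstacle is confirming that $\Lambda$ preserves the Minkowski form and that $\Lambda^{-1}$ is exactly the matrix displayed in \eqref{lt}. Both follow from $\rho^2(1-|v|^2)=1$, which was already used in the proof of Proposition \ref{app-lem:lorentz}. Checking $\Lambda\Lambda^{-1}=I$ by blocks reduces to the identities $\widetilde\Lambda v=\rho v$ and $\widetilde\Lambda^2-\rho^2 v\otimes v=I_d$, which I would verify directly. As a consistency check, I would confirm that $\hat p_0'$ equals $\sqrt{(mc)^2+|\hat p'|^2}$, which it must since both post-collisional vectors are on shell.
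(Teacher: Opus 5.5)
Your proposal is correct and follows essentially the same route as the paper: pass to the centre-of-mass frame via $\Lambda$, show that the transformed post-collisional four-vectors are $(\sqrt s/2,\pm g\omega/2)^T$, and pull back with $\Lambda^{-1}$. The only difference is how you get the centre-of-mass form. The paper reapplies Proposition~\ref{app-lem:lorentz} directly to the post-collisional pair, noting that $s$, $g$, $v$, $\rho$, and hence $\Lambda$, are unchanged under the conservation laws. You instead derive it from linearity of $\Lambda$ and invariance of the Minkowski form, which is why you need the extra block identities you list; those identities are correct.
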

    \begin{proof}
By conservation laws \eqref{app-cl}, we have \begin{align*}
    \quad \hat s'=s,\quad \hat g'=g\quad \text{and}\quad \hat v'=v,\quad \hat \rho'=\rho,\quad \Lambda^{\pm1}=(\hat \Lambda')^{\pm1}.
\end{align*}
We denote $\widetilde  p',\,\widetilde  p'_{0}$ and $\widetilde  p_*',\,\widetilde  p'_{0,*}$ the Lorentz transformation of the post-collision momenta and energy. Applying Proposition
    \ref{app-lem:lorentz} to $(\hat p^\mu)'$ and $(\hat p^\mu)'_*$, we have  
\begin{equation*}
\label{tilde-p'}
    \widetilde  p'+\widetilde  p'_*=0,\quad |\widetilde  p'|=|\widetilde  p'_*|=\frac{g}{2}\quad\text{and}\quad \widetilde  p_0'=\widetilde  p_{0*}'=\frac{\sqrt s}{2}.   
\end{equation*}
Then there exists $\omega\in S^{d-1}$ such that
\begin{align*}
    \widetilde  p'=\frac{g}{2} \omega\quad\text{and}\quad \widetilde  p'_*=-\frac{g}{2} \omega.
\end{align*}
Applying the inverse Lorentz transformation, we have  
\begin{align*}
    (\hat p_0',\hat p')^T&=\Lambda^{-1}(\sqrt s/2,g\omega/2)^T\\
    &=\Big(\frac{\sqrt s \rho }{2}+\rho v\cdot\omega\frac{g}{2},\frac{\sqrt s \rho v}{2}+\frac{g}{2}\Big(I_d+(\rho-1)\frac{v\otimes v}{|v|^2}\Big)\omega\Big)^T\\
    &=\Big(\frac{p_0+p_{0*}}{2}+\frac{g}{2}\frac{p+p_*}{\sqrt s}\cdot \omega,\frac{p+p_*}{2}+\frac{g}{2}\Big(I_d+(\rho-1)\frac{v\otimes v}{|v|^2}\Big)\omega\Big)^T,
\end{align*}
and $(\hat p_{0*}',\hat p_*')^T$ follows analogously.

\end{proof}

We show another representation of  the scattering angle defined in \eqref{theta-c}
\begin{align}
\label{app:theta-c}
    \hat \theta=\arccos \frac{(p^\mu-p^\mu_*)\cdot(p'_\mu-(p_\mu)_*')}{g^2}
\end{align}

\begin{proposition}
\label{app-prop:theta-c}
    The scattering angle defined in \eqref{app:theta-c} is equivalent to 
    \begin{equation*}
\label{RB-theta}
\hat \theta= \arccos \hat k \cdot\omega.
\end{equation*}
\end{proposition}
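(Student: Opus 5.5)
The plan is to exploit the Lorentz invariance of the Minkowski inner product $a^\mu\cdot b_\mu=a_0b_0-a\cdot b$ under the transformation $\Lambda$ of \eqref{lt}, and to evaluate the numerator of \eqref{app:theta-c} in the centre-of-mass frame, where Proposition \ref{app-lem:lorentz} and the proof of Proposition \ref{app-prop:post-p} give explicit expressions for all four momenta.

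First, I would check that $\Lambda$ preserves the Minkowski form, i.e.\ $\Lambda^T\eta\Lambda=\eta$ with $\eta=\mathrm{diag}(1,-I_d)$. This reduces to the identities $\rho^2(1-|v|^2)=1$, which holds since $1-|v|^2=\rho^{-2}$, and $\widetilde\Lambda v=\rho v$. These are the same facts already used in the proof of Proposition \ref{app-lem:lorentz}. Consequently,
\[
(p^\mu-p^\mu_*)\cdot(\hat p'_\mu-(\hat p_\mu)_*')=(\widetilde p^\mu-\widetilde p^\mu_*)\cdot(\widetilde p'_\mu-(\widetilde p_\mu)'_*).
\]

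Second, I would substitute the centre-of-mass expressions. By Proposition \ref{app-lem:lorentz}, $\widetilde p^\mu-\widetilde p^\mu_*=(0,g\hat k)$. From the proof of Proposition \ref{app-prop:post-p}, the post-collision momenta satisfy $\widetilde p'^\mu=(\sqrt s/2,g\omega/2)$ and $\widetilde p'^\mu_*=(\sqrt s/2,-g\omega/2)$, since $\hat p'^\mu=\Lambda^{-1}(\sqrt s/2,g\omega/2)^T$ and $\Lambda$ is the same for pre- and post-collision states. Hence $\widetilde p'^\mu-\widetilde p'^\mu_*=(0,g\omega)$. The Minkowski product of these two difference vectors equals $-g^2\,\hat k\cdot\omega$ with the metric $(+,-,\dots)$.

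The main obstacle is this sign convention. With the lowered index as defined in the paper, the raw product gives $-g^2\hat k\cdot\omega$. Note that $g^2=-(p^\mu-p^\mu_*)\cdot(p_\mu-(p_*)_\mu)$ carries the same sign flip, so the normalisation in \eqref{app:theta-c} must be read consistently as the cosine of the angle between spacelike vectors. That is, the numerator is understood as $-(p^\mu-p^\mu_*)\cdot(\hat p'_\mu-(\hat p_\mu)'_*)$, matching the minus sign built into the definition of $g$. I would make this explicit, noting that it is the convention for which $\hat\theta\in[0,\pi/2]$ and which reduces to the classical angle in the Newtonian limit of Remark \ref{sec:limit-n=2}. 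Dividing by $g^2$ then gives $\cos\hat\theta=\hat k\cdot\omega$, which is the claim.
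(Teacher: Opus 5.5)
Your argument is correct, and it takes a different route from the paper's. The paper never uses invariance of the Minkowski product. It works directly in the lab frame. It expands $g\hat k\cdot\omega=\widetilde\Lambda(p-p_*)\cdot\omega-\rho(p_0-p_{0*})v\cdot\omega$ from the formula for $\widetilde p-\widetilde p_*$. Separately, it expands the numerator of \eqref{app:theta-c} using the explicit post-collisional energies and momenta of Proposition \ref{app-prop:post-p}, together with $(p+p_*)/\sqrt s=\rho v$ and the symmetry of $\widetilde\Lambda$. It then matches the two expressions term by term.

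You instead check once that $\Lambda^T\eta\Lambda=\eta$ and evaluate in the centre-of-mass frame. There both relative four-momenta are purely spatial, $(0,g\hat k)$ and $(0,g\omega)$, so the identity is immediate. Your version is shorter and explains why the statement holds: $\hat\theta$ is the centre-of-mass angle between the incoming and outgoing relative momenta. It also invites fewer bookkeeping slips than the coordinate matching. The paper's version avoids verifying that $\Lambda$ preserves the Minkowski form, a fact it uses implicitly but never checks, at the cost of the explicit calculation.

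Your sign remark is well founded. The paper's own computation also yields a numerator equal to $-g^2\,\hat k\cdot\omega$: its last displayed line is exactly $-g$ times its expression for $g\hat k\cdot\omega$. The proof then stops without drawing the conclusion. Read literally, the definition would therefore give $\cos\hat\theta=-\hat k\cdot\omega$, so the corrected reading you adopt is needed.

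Of your two justifications for that reading, the Newtonian-limit one is the convincing one: it gives $\hat\theta\to\theta$ rather than $\pi-\theta$, as Remark \ref{sec:limit-n=2} requires. The reading is also consistent with the expansion $\omega-\hat k=\hat k(\cos\hat\theta-1)+\gamma\sin\hat\theta$ in the proof of Lemma \ref{lem:grazing-c}. The restriction $\hat\theta\in[0,\pi/2]$ does not single out a sign. With either sign, $\hat\theta$ ranges over $[0,\pi]$ as $\omega$ ranges over $S^{d-1}$; the restriction comes from symmetrising the kernel in $\omega$.
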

\begin{proof}
We note that
\begin{align*}
 \hat k\cdot\omega&= g^{-1}\Big((p-p_*)+\big(-\rho (p_0-p_{0*})v +(\rho-1)\frac{v\cdot (p-p_*)}{|v|^2}\big)v\Big)\cdot\omega  \\
 &= -g^{-1}(p_0-p_{0*})\rho v\cdot\omega+g^{-1}\big(I_d+\frac{v\otimes v}{|v|^2}\big)(p-p_*)\cdot\omega. 
\end{align*}
We recall the definition of the scattering angle \eqref{theta-c}
\begin{align*}
\theta&=\arccos \frac{(p^\mu-p^\mu_*)\cdot(p'_\mu-(p_\mu)_*')}{g^2}\\
&=\arccos \frac{(p_0-p_{0*})(p_0'-p_{0*}')-(p-p_*)(p'-p_*')}{g^2}.
\end{align*}
By direct calculation, we have 
\begin{align*}
&(p_0-p_{0*})(p_0'-p_{0*}')-(p-p_*)(p'-p_*')\\
=&{}\frac{g}{\sqrt s}(p_0-p_{0*})(p+p_*)\cdot \omega-g(I_d+(\rho-1)\frac{v\otimes v}{|v|})(p-p_*)\cdot \omega\\
=&{}g(p_0-p_{0*})\rho v\cdot \omega-g(I_d+(\rho-1)\frac{v\otimes v}{|v|})(p-p_*)\cdot \omega.
\end{align*}

\end{proof}
\begin{proposition}
The operator $S(p,p_*)$ given by \eqref{def:S}
\begin{align*}
S(p,p_*)&=\Big[\big(\big(p^\mu\cdot (p_\mu)_*\big)^2-(mc)^4\big)I_d-(mc)^2(p\otimes p+p_* \otimes p_*)\\
&\quad+\big(p^\mu\cdot (p_\mu)_*\big)^2(p\otimes p_*+p_*\otimes p)\Big]\Big(\big(p^\mu\cdot (p_\mu)_*\big)^2-(mc)^4\Big)^{-1},
\end{align*}
is equivalent to 
\begin{align*}
\label{S-2}
   S(p,p_*)=\widetilde \Lambda \Pi_{\hat k^\perp} \widetilde \Lambda,\quad \widetilde \Lambda=I_d+(\rho-1)\frac{v\otimes v}{|v|^2}.
\end{align*}
   
\end{proposition}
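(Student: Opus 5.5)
The plan is to compute $\widetilde\Lambda\Pi_{\hat k^\perp}\widetilde\Lambda$ directly and express everything through $p,p_*$ and the invariants $s,g$. Since $\widetilde\Lambda=I_d+(\rho-1)\frac{v\otimes v}{|v|^2}$ is symmetric, we have $\widetilde\Lambda^T=\widetilde\Lambda$. Writing $\Pi_{\hat k^\perp}=I_d-\hat k\otimes\hat k$ gives
\begin{align*}
\widetilde\Lambda\Pi_{\hat k^\perp}\widetilde\Lambda=\widetilde\Lambda^2-\frac{1}{g^2}\,w\otimes w,\qquad w:=\widetilde\Lambda(\widetilde p-\widetilde p_*),
\end{align*}
where we used $|\widetilde p-\widetilde p_*|=g$ from Proposition \ref{app-lem:lorentz}. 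The identity that drives every simplification is $1-|v|^2=\rho^{-2}$, which is already used in that proposition. It is equivalent to $\rho^2-1=\rho^2|v|^2$, i.e.\ $(\rho^2-1)/|v|^2=\rho^2$.

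\textbf{Step 1: the first term.} Since $v\otimes v/|v|^2$ is a projection,
\begin{align*}
\widetilde\Lambda^2=I_d+(\rho^2-1)\frac{v\otimes v}{|v|^2}=I_d+\rho^2 v\otimes v=I_d+\frac{(p+p_*)\otimes(p+p_*)}{s},
\end{align*}
using $\rho v=(p+p_*)/\sqrt s$.

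\textbf{Step 2: the vector $w$.} From the proof of Proposition \ref{app-lem:lorentz},
\begin{align*}
\widetilde p-\widetilde p_*=(p-p_*)+(p_0-p_{0*})\Big(-\rho+\frac{\rho-1}{|v|^2}\Big)v.
\end{align*}
Apply $\widetilde\Lambda$, using $\widetilde\Lambda v=\rho v$ and $v\cdot(p-p_*)=p_0-p_{0*}$. The coefficient of $(p_0-p_{0*})v$ collapses to $\frac{\rho^2-1}{|v|^2}-\rho^2=0$, so $w=p-p_*$. I expect this cancellation to be the only delicate point, because it is where the Lorentz-frame quantities disappear. Combining the two steps,
\begin{align*}
S=I_d+\frac{(p+p_*)\otimes(p+p_*)}{s}-\frac{(p-p_*)\otimes(p-p_*)}{g^2}.
\end{align*}

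\textbf{Step 3: common denominator.} Put $P:=p^\mu\cdot(p_\mu)_*$ and use the on-shell condition. Then $s=2(mc)^2+2P$ and $g^2=2P-2(mc)^2$, hence $sg^2=4\big(P^2-(mc)^4\big)$. Over this common denominator, the coefficient of $p\otimes p$ and of $p_*\otimes p_*$ is $(g^2-s)/4=-(mc)^2$. The coefficient of $p\otimes p_*+p_*\otimes p$ is $(g^2+s)/4=P$. The identity term carries $P^2-(mc)^4$. This gives
\begin{align*}
S=\Big[\big(P^2-(mc)^4\big)I_d-(mc)^2(p\otimes p+p_*\otimes p_*)+P(p\otimes p_*+p_*\otimes p)\Big]\big(P^2-(mc)^4\big)^{-1}.
\end{align*}
This agrees with \eqref{def:S} except that the cross-term coefficient comes out as $P$, not the $P^2$ printed there. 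I would present this as the corrected form of \eqref{def:S}.

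Because it disagrees with the statement, I would re-verify this coefficient independently. The Newtonian limit gives a consistency check: as $c\to\infty$, $S$ should tend to $\Pi_{(p-p_*)^\perp}$ (compare Remark \ref{sec:limit-n=2}). Only the combination $-(mc)^2(p\otimes p+p_*\otimes p_*)+P(p\otimes p_*+p_*\otimes p)$, with $P\approx(mc)^2$, produces $-(p-p_*)\otimes(p-p_*)$ at leading order, matching the limit $\Pi_{(p-p_*)^\perp}$.
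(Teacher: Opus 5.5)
Your proof is correct and takes the same route as the paper, whose proof only asserts a direct check using $s=2\big(p^\mu\cdot(p_\mu)_*+(mc)^2\big)$ and $g^2=2\big(p^\mu\cdot(p_\mu)_*-(mc)^2\big)$; your intermediate identities $\widetilde\Lambda(\widetilde p-\widetilde p_*)=p-p_*$ and $S=I_d+\frac{(p+p_*)\otimes(p+p_*)}{s}-\frac{(p-p_*)\otimes(p-p_*)}{g^2}$ spell out that check. Your correction of the cross-term coefficient from $\big(p^\mu\cdot(p_\mu)_*\big)^2$ to $p^\mu\cdot(p_\mu)_*$ is also right: the printed coefficient is dimensionally inconsistent and would make $S$ blow up as $c\to\infty$, whereas the corrected form tends to $\Pi_{(p-p_*)^\perp}$ and matches the standard relativistic Landau kernel.
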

\begin{proof}
One can check straightforwardly by using the following equalities
    \begin{align*}
s=2\big((p^\mu\cdot (p_\mu)_*+(mc)^2\big) \quad\text{and}\quad
g^2=2\big((p^\mu\cdot (p_\mu)_*-(mc)^2\big),
\end{align*}
\end{proof}
The rest of this appendix is devoted to show the small-angle limit in Lemma \ref{lem:grazing-c}, that is to show 
\begin{equation}
\label{app:grazing}
\begin{aligned}
&\lim_{\hat \theta\to0}\hat\theta^{-2}\int_{S^{d-2}_{k^\perp}}\kappa(f')\kappa(f_*')\onc \phi \\
=&{}\frac{|S^{d-2}|}{8(d-1)}\Big(2g^2\big((\nabla_p-\nabla_{p_*})\kappa( f)\kappa( f_*)\big)\cdot S(p,p_*)\big(\nabla_p\phi-\nabla_{p_*}\phi_*\big)\\
&\quad+\kappa( f)\kappa( f_*)(\nabla_p-\nabla_{p_*})\cdot\big(g^2S(p,p_*)(\nabla_p \phi-\nabla_{p_*} \phi_*)\big)\Big),
\end{aligned}
\end{equation}
where $\kappa(f)=a+\alpha f$ and $a,\alpha\in\{-1,0,1\}$. The operator $\onc$ and $S(p,p_*)$ are defined as in \eqref{def:on-c} and \eqref{def:S} respectively. 

\begin{proof}[Proof of Lemma \ref{lem:grazing-c}]
We recall the Lorentz transformation \eqref{p-mu} of $p^\mu$ that $(\widetilde  p_0,\widetilde  p)^T=\Lambda (p_0, p)^T$. We use the notation $\widetilde\phi$ for the function $\widetilde\phi$ such that 
\begin{align*}
   \phi(q,p)=\widetilde \phi(q,\widetilde  p)\quad \text{for all}\quad (q,p)\in\Do.
\end{align*}
Let $\widetilde  p'$ and $\widetilde  p_*'$ denote the Lorentz transformation of $\hat p'$ and $\hat p_*'$. We write 
\begin{align*}
 \widetilde \phi'=\phi(q,\widetilde  p'),\quad \widetilde \phi'_*=\phi(q,\widetilde  p'_*),\quad \widetilde \phi=\phi(q,\widetilde  p),\quad \widetilde \phi_*=\phi(q,\widetilde  p_*). \end{align*}
Then to show \eqref{app:grazing}, we first show that 
\begin{equation}
\label{app:grazing-tilde}
\begin{aligned}
&\lim_{\hat \theta\to0}\frac{1}{\hat \theta^2}\int_{S^{d-2}_{ k^\perp}}\kappa(\widetilde f')\kappa(\widetilde f_*')\big(\widetilde \varphi'-\widetilde \varphi+\widetilde \varphi_*'-\widetilde \varphi_*\big) \\
=&{}\frac{|S^{d-2}|}{8(d-1)}\Big(2g^2\big((\nabla_{\widetilde  p}-\nabla_{\widetilde  p_*})\kappa( \widetilde  f)\kappa(\widetilde  f_*)\big)\cdot \Pi_{\hat k^\perp}\big(\nabla_{\widetilde  p}\widetilde  \phi-\nabla_{\widetilde  p_*}\widetilde  \phi_*\big)\\
&\quad+\kappa(\widetilde  f)\kappa(\widetilde  f_*)(\nabla_{\widetilde  p}-\nabla_{\widetilde  p_*})\cdot\big(g^2\Pi_{\hat k^\perp}(\nabla_{\widetilde  p} \widetilde \phi-\nabla_{\widetilde  p_*} \widetilde \phi_*)\big)\Big).
\end{aligned}
\end{equation}
Then \eqref{app:grazing} holds as a consequence of the following identity
  \begin{align}
  \label{id-nabla-tilde}
   \Pi_{\hat k^\perp}\big(\nabla_{\widetilde  p} -\nabla_{\widetilde  p_*}\big)\widetilde G=\Pi_{\hat k^\perp}\widetilde \Lambda\big(\nabla_p-\nabla_{p_*}\big)G.
  \end{align}
  for all $G=G(q,p,p_*)\in C^\infty(\R^{3d})$. We show \eqref{id-nabla-tilde} by the definition of Lorentz transformation \eqref{lt}
  \begin{gather*}
    \nabla_{\widetilde  p}\widetilde G= \Big(\frac{\d p}{\d \widetilde  p}\Big)^T\nabla_{p}G=\Big(\widetilde\Lambda+\frac{\rho}{\widetilde  p_0}v\otimes \widetilde  p\Big)^T\nabla_pG,\\
    \big(\nabla_{\widetilde  p} -\nabla_{\widetilde  p_*}\big)\widetilde G
=\widetilde  \Lambda\big(\nabla_p-\nabla_{p_*}\big)G+\frac{\rho g}{\sqrt s}\big((\nabla_{p} G+\nabla_{p_*}G)\cdot v\big)\hat k.
  \end{gather*}
We are left to show the limit \eqref{app:grazing-tilde}. 
By Proposition \ref{app-prop:post-p}, we have 
\begin{align}
\label{app-diff}
\widetilde  p'-\widetilde  p=\frac{g}{2}(\omega-\hat k)\quad \text{and}\quad \widetilde  p'_*-\widetilde  p_*=-\frac{g}{2}(\omega-\hat k).   
\end{align}
Notice that the difference \eqref{app-diff} is coincidence with the classical case \eqref{post-p} with $|p-p_*|$ replaced by $g$ and $k$ replaced by $\hat k$. For the reason of completeness, we sketch the proof here.

By Taylor expansion, we have
\begin{align*}
    &\onc \widetilde \phi=\frac{g}{2}(\omega-\hat k)\cdot\big(\nabla_{\widetilde  p}\widetilde \phi-\nabla_{\widetilde  p_*}\widetilde \phi_*\big)+\frac{g^2}{4}(\omega-\hat k)\otimes (\omega-\hat k):T,\\
    & \kappa(\widetilde  f')\kappa(\widetilde  f_*')=\widetilde \kappa\widetilde \kappa_*+\frac{g}{2}(\omega-\hat k)\cdot \big(\nabla_{\widetilde  p}-\nabla_{\widetilde  p_*}\big)\widetilde \kappa\widetilde  \kappa_*+O(|w-\hat k|^2),
\end{align*}
where $\widetilde  \kappa=\kappa(\widetilde f)$ and  $\widetilde  \kappa_*=\kappa(\widetilde f_*)$ and $T$ is given by 
\begin{align*}
T&=\int_0^1t\int_0^1\Big(D^2_{\widetilde  p}\widetilde \varphi\big(q,\frac{g}{2}s(t\omega+(1-t)k)+(1-s)k\big)\\
&+D^2_{\widetilde  p}\widetilde \varphi\big(q,-\frac{g}{2}s(t\omega+(1-t)k)+(1-s)k\big)\Big)\dd s\dd t.
\end{align*}
We recall the definition of the scattering angle \eqref{app:theta-c} 
\begin{equation*}
\label{omega-k}
\omega-\hat k=\hat k(\cos\hat \theta-1)+\gamma\sin\hat \theta,   \quad \gamma\in S^{d-2}_{\hat k^\perp},
\end{equation*}
where $(\cos\hat \theta-1)=-\frac{\hat \theta^2}{2}+o(\hat\theta^2)$ and $\sin\hat \theta=\hat \theta+o(\hat \theta)$. 

By classical argument as in \cite{villani1998new}, we have
 \begin{equation}
    \label{int-1}
\begin{aligned}
&\lim_{\hat\theta\to0}\hat \theta^{-2}\int_{S^{d-2}_{\hat k^\perp}}\onc\phi=\frac{|\widetilde  p-\widetilde  p_*|^2|S^{d-2}|}{8(d-1)}(\nabla_{\widetilde  p}-\nabla_{\widetilde  p_*})\cdot\Pi_{\hat k^\perp}\big(\nabla_{\widetilde  p}\widetilde \phi-\nabla_{\widetilde  p_*}\widetilde \phi_*\big).
\end{aligned}
\end{equation}
By using \cite[Proposition 3.2]{DGH25}
\begin{equation*}
\begin{aligned}
\int_{S^{d-2}_{k^\perp}} (\omega-\hat k)\otimes(\omega-\hat k) =\frac{|S^{d-2}|}{d-1}\Pi_{\hat k^\perp}+o(\hat\theta^2),
\end{aligned}
\end{equation*}
we have 
\begin{equation*}
\label{int-2}
\begin{aligned}
 &\int_{S^{d-2}_{\hat k^\perp}}   (\omega-\hat k)\cdot \big(\nabla_{\widetilde  p}-\nabla_{\widetilde  p_*}\big)\widetilde  \kappa \widetilde \kappa_*
(\omega-\hat k)\cdot\big(\nabla_{\widetilde  p}\widetilde \phi-\nabla_{\widetilde  p_*}\widetilde \phi_*\big)\\
 =&{}\int_{S^{d-2}_{\hat k^\perp}}   (\omega-\hat k)\otimes (\omega-\hat k):\big(\nabla_{\widetilde  p}-\nabla_{\widetilde  p_*}\big)\widetilde  \kappa \widetilde \kappa_* \otimes \big(\nabla_{\widetilde  p}\widetilde \phi-\nabla_{\widetilde  p_*}\widetilde \phi_*\big)\\
  =&{}\frac{|S^{d-2}|\hat \theta^2}{(d-1)}\big(\nabla_{\widetilde  p}-\nabla_{\widetilde  p_*}\big)\widetilde  \kappa \widetilde \kappa_* \cdot\Pi_{\hat k^\perp}\big(\nabla_{\widetilde  p}\widetilde \phi-\nabla_{\widetilde  p_*}\widetilde \phi_*\big)
\end{aligned}
\end{equation*}

\end{proof}
\subsection*{Acknowledgements}
M. H. D is funded by an EPSRC Standard Grant EP/Y008561/1. Z.~H. is funded by the Deutsche Forschungsgemeinschaft (DFG, German Research Foundation) – Project-ID 317210226 – SFB 1283.


\printbibliography
\end{document}